\def\NAT@def@citea{\def\@citea{\NAT@separator}}
\newtheorem{thm}{Theorem}
\newtheorem{cor}{Corollary}
\newtheorem{lem}{Lemma}
\newtheorem{ass}{Assumption}
\renewcommand{\P}{\mathbb{P}}
\newcommand{\E}{\mathbb{E}}
\begin{document}

\title{\bf Sampling distributions and estimation for multi-type Branching Processes.}
\author{Gonzalo Contador, Departamento de Matem\'atica, Universidad T\'ecnica Federico Santa Mar\'ia
\and
Bret M. Hanlon, Department of Biostatistics, University of Wisconsin-Madison.
}
\date{}
\maketitle
 
 \begin{abstract}
	This study focuses on a multi-dimensional supercritical branching process with offspring distribution in a parametric family. Each vector coordinate represents the number of offspring of a given type, and the process is observed under family-size sampling: a random sample is drawn from the population, and each individual reports its vector of brood sizes. We show that the probability of no siblings being sampled (so that the sample can be considered independent) converges to one under specific conditions on the sample size. Moreover, the sampling distribution of the observations converges to a weighted mixture of offspring distributions, enabling observation to be considered an i.i.d. sample of a mixture law for which standard inference methodology applies. We provide asymptotic distributions for the resulting estimators and conduct a simulation study using respondent driven sampling to assess their performance. 
.\\
\noindent \textbf{KEYWORDS:} Asymptotic maximum likelihood, multi dimensional Galton Watson process, respondent driven sampling.
 \end{abstract}

\newpage

\section{Introduction}\label{sec:Intro}

\indent Consider a multi-type (multidimensional) supercritical branching process whose offspring distributions (for each type of individual) belong to a parametric family. The process is observed under family-size sampling: a random sample of
individuals is drawn and each reports his brood size as a vector containing the number of siblings of each type. When this vector is one dimensional, under a bounded growth condition on the sample size, the sample as can be considered an independent and identically distributed sample of the so called \textit{size biased distribution} \cite{mm}, for which abundant statistical estimation and inference methods are available. In a multi-dimensional setting, the brood sizes in each coordinate are grouped according to a type or classification of individuals (presence of a genetic marker, for instance) and, as a key feature, each type of individual produces offspring according to a different probability distribution, so the multi-dimensional problem is characterized by as many probability distributions as types of individuals coexist in the population. Our contribution is to extend these results to such multi dimensional setting: we will provide sufficient conditions on the sample size and the different probability distributions of the problem so that the sample behaves asymptotically as an i.i.d. sample, and further show that this asymptotic identical distribution is simply a weighted average of the \textit{size biased} distributions of each type's offspring distribution. 

In \cite{guttorp1989}, a reference for the main results for statistical inference for branching processes is provided. As Galton-Watson branching processes (GWP) are characterized by their offspring distribution, statistical problems typically involve its estimation. The most commonly studied observation scheme for  a branching process is observing aggregated information of generation sizes (\cite{crump1972nonparametric, heyde1974estimating, dion1978statistical, lockhart1982non, guttorp1989, wei1990est, dion1994, kuelbs2011weak}). A second observation process is based on family-size sampling, which inherently carries a bias \cite{scheaffer1972size}. For example, if the first generation has two nodes, the first produces one child and the second produces two children, then the sampled brood sizes of the subsequent generation will report a 2 twice as often as a 1. \cite{mm} discusses parametric estimators of the offspring distribution mean that correct for the size bias inherent to this sampling scheme.

In the multi-dimensional setting where individuals of different type coexist in families among the population, aside from the previously described size bias and lack of independence when sampling individuals of the same family, a third issue arises as different type of individuals reproduce according to different probability distributions, which demands for more specificity when imposing non-extintion conditions and upon determining a unique sampling distributions. We now provide the mathematical notation used throughout this work.

Throughout this work, we distinguish vectors $\bm{x} \in \mathbb{R}^l$ and scalars $x \in \mathbb{R}$ according to their boldface notation. For a vector $\bm{W} \in \mathbb{R}^l$, we denote its $L_1$ norm by $|\bm{W}|=\sum_{i=1}^l |W_i|,$ where for a scalar $x$, $|x|$ denotes its absolute value. Let $\mathbb{N}$ be the set of all nonnegative integers. For $l \in \mathbb{N}, l \geq 2$, consider an $l$-type branching process $\{\bm{Z_{n}} \}_{n \in \mathbb{N}}$ defined by standard recursion, this is, if  $\bm{Z_{n-1}} =(Z_{n-1,1}, \ldots Z_{n-1,l})$ is the vector of population sizes at generation $n-1$, with $\bm{Z_0}$ a previously defined vector with $|\bm{Z_0}|=1$, with updates defined for $n\in \mathbb{N}$ as \begin{equation}
	\label{eq:rec}\bm{Z_{n}} = \sum^l_{j=1}\sum^{Z_{n-1,j}}_{i=1} \bm{Y^{(j)}_{n-1,i}},
\end{equation} where $\bm{Y^{(j)}_{n-1,i}}$ is the offspring vector of the $i$-th individual of type $j$ in generation $n-1$. For $j \in \{1, \ldots ,l\}$, the vectors $\bm{Y^{(j)}}_{\cdot, \cdot}$ are distributed according to $p_j$ and if for some $i$, $Z_{n-1,i}=0$ the corresponding sum is interpreted as the zero vector.  We also denote the total number of individuals with parent of type $i$ in generation $n$ as

\begin{equation}
	\label{eq:typeofind} S_{n-1,i}=\sum_{k=1}^{Z_{n-1,i}} |\bm{Y^{(i)}_{n-1,k}}|.
\end{equation} 

The probability mass functions $\{p_j\}_{j=1, \ldots, l}$ in $\mathbb{N}^l$  verify the two following conditions

\begin{ass}\label{ass:supercriticality} For $\bm{0}$ the zero vector, $\{\bm{e_1}, \ldots , \bm{e_l} \}$ the canonical basis of $\mathbb{R}^l$ and any $j \in \{1, \ldots, l\}$:
	
	\begin{itemize}
		\item[i)] $p_j(\bm{0})=0.$ (almost surely, all individuals generate offspring)
		\item[ii)] $\sum_{k=1}^l p_j(\bm{\bm{e_k}})<1$. (the probability of multiple offspring is strictly positive)
	\end{itemize}
\end{ass}

Assumption \ref{ass:supercriticality} guarantees non-extinction and  supercriticality of the process, and in particular guarantees that no family size corresponds to the zero vector (as it can't be sampled). This is: $$\P(\bm{Z_{n}}=\bm{0} \mbox{ for some } n \in \mathbb{N})=0\quad \text{and} \quad \P(\exists K>0,  |\bm{Z_{n}}|\leq K \mbox{ for all } n \in \mathbb{N})=0.$$

We define the \textbf{reproduction matrix} $M$ of the branching process as the $l \times l$ matrix with row $i$ equal to the expected offspring vector of an individual of type $i$, this is $M_{ij}=\E(Y^{(i)})_j$. For the reproduction matrix, we assume:

\begin{ass}\label{ass:prandeig} 
	\begin{itemize}
		\item[i)] $M$ is positively regular. This is, $\exists n \in \mathbb{N}$ such that $\forall i,k \in \{1, \ldots, l\}$, $(M^n)_{ik}>0$.
		\item[ii)] The \textbf{largest eigenvalue} of $M$ and its \textbf{corresponding eigenvector}, denoted by $\rho$ and $\bm{b}$ for the rest of this work, verify $\rho >1$ and $\bm{b}\geq 0$.
	\end{itemize}
\end{ass}

The inequality $\bm{b}\geq 0$ should be understood component-wise. Positive regularity in particular ensures that all types of individuals appear infinitely often in the population. Assumption \ref{ass:prandeig} extends the non extinction result in \ref{ass:supercriticality} to each type \cite{posreg}, i.e. for each $j \in \{1, \ldots ,l\}$, $\P(\exists n_0,  |\bm{Z_{n,j}}|=0 \mbox{ for all } n \geq n_0)=0.$ The Perron-Frobenius theorem \cite{frobenius1912matrizen} guarantees the existence of an eigenvector with non-negative components whose the corresponding eigenvalue will be real, non-negative and its absolute value greater than or equal to all other eigenvalues \cite[Theorem XIII.3]{gantmakher2000theory}. Our assumption that $\rho>1$ is more restrictive and it is needed for Theorem \ref{thm:KST}, the Kesten-Stigum Theorem \cite{kesten1966}, which is a stronger version of the Martingale Convergence Theorem \cite{doob1961notes} for multi-dimensional branching processes \cite{ksarticle}:

\begin{thm}\label{thm:KST} 
	For a supercritical, positively regular branching process with reproduction matrix $M$ with largest eigenvalue $\rho >1$ and associated eigenvector $\bm{b}$ such that $|\bm{b}|=1$, there exists a one dimensional, almost surely positive random variable $W$ such that
	
	$$\bm{Z_{n}}\rho^{-n} \rightarrow \bm{b}W \mbox{ almost surely.}$$

\end{thm}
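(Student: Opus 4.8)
The plan is to combine the Perron--Frobenius structure of the reproduction matrix with a martingale argument. First I would use positive regularity (Assumption \ref{ass:prandeig}(i)) to put $M$ in Perron--Frobenius form: $\rho$ is a simple eigenvalue, strictly dominant in modulus over every other eigenvalue, its right eigenvector can be taken to be the $\bm b$ of the statement (with strictly positive coordinates, normalized by $|\bm b|=1$), and there is a strictly positive left eigenvector $\bm u$, normalized so that $\bm u\cdot\bm b = 1$. Writing $\Pi=\bm b\,\bm u^{\top}$ for the associated rank-one spectral projection, one has $M^{n}=\rho^{n}\Pi+M^{n}(I-\Pi)$ with $\|M^{n}(I-\Pi)\|=O(n^{r}|\lambda_2|^{n})$ for some integer $r\ge 0$ and some $|\lambda_2|<\rho$; this is the spectral gap that will force the population vector to collapse onto the $\bm b$-direction.

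Second, from the recursion \eqref{eq:rec} and $\E[\bm Y^{(j)}]=\bm\mu_j$ we get $\E[\bm Z_{n+1}\mid\mathcal F_n]=\bm Z_n M$ with $\mathcal F_n=\sigma(\bm Z_0,\dots,\bm Z_n)$, so that $W_n:=\rho^{-n}\,\bm Z_n\cdot\bm b$ satisfies $\E[W_{n+1}\mid\mathcal F_n]=\rho^{-n-1}(\bm Z_n M)\cdot\bm b=\rho^{-n-1}\bm Z_n\cdot(\rho\bm b)=W_n$. Thus $\{W_n\}$ is a nonnegative martingale and converges almost surely to a limit $W\ge 0$. To upgrade this scalar statement to the vector statement I would show that the transverse component is negligible, i.e. $\rho^{-n}(I-\Pi)\bm Z_n\to\bm 0$ almost surely; this follows from the spectral gap above together with a second-moment bound on the coordinates of $\bm Z_n$ (summable along a geometric subsequence, by Borel--Cantelli, then filling the gaps by monotone control of each coordinate). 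Once this is in hand, $\rho^{-n}\bm Z_n=\rho^{-n}\Pi\bm Z_n+o(1)=\big(\rho^{-n}\bm u\cdot\bm Z_n\big)\bm b+o(1)$, and on the (probability-one) survival event the scalar prefactor is forced to equal $W$, giving $\rho^{-n}\bm Z_n\to\bm b W$ a.s.

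Third, and this is the actual content, I must show that $W$ is almost surely strictly positive and not merely nonnegative. Under a finite-second-moment hypothesis on the offspring laws $\{p_j\}$ the cleanest route is an $L^2$ bound: since $\mathrm{Var}(\bm Z_{n+1}\cdot\bm b\mid\mathcal F_n)=\sum_{j=1}^{l}Z_{n,j}\,\sigma_j^2$ with $\sigma_j^2:=\mathrm{Var}(\bm Y^{(j)}\cdot\bm b)<\infty$, and $\E Z_{n,j}=O(\rho^{n})$ by Perron--Frobenius, the martingale increments satisfy $\E\big[(W_{n+1}-W_n)^2\big]=O(\rho^{-n-2})$, which is summable because $\rho>1$. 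Hence $\sup_n\E[W_n^2]<\infty$, the family $\{W_n\}$ is uniformly integrable, and $\E W=\lim_n\E W_n=\bm Z_0\cdot\bm b>0$ (strictly, since $\bm Z_0$ is a nonzero canonical basis vector and $\bm b$ has strictly positive entries). In particular $\P(W>0)>0$. Conditioning on $\bm Z_1$ and applying the branching property then shows that $q:=\P(W=0)$ satisfies a fixed-point relation whose only solutions, given $\E W>0$, place $q$ in $\{0,1\}$; since extinction has probability zero by Assumption \ref{ass:supercriticality}(i) and $q<1$, we conclude $W>0$ almost surely. If one does not wish to assume finite variances, this last step is precisely the Kesten--Stigum $x\log x$ theorem, and I would instead invoke \cite{kesten1966, ksarticle}, whose proof proceeds via a size-biasing / spine change-of-measure argument.

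The step I expect to be the main obstacle is exactly this non-degeneracy of $W$: the martingale convergence and the collapse onto the $\bm b$-direction are comparatively routine (Perron--Frobenius linear algebra plus standard second-moment estimates), whereas ruling out $W\equiv 0$ genuinely requires a moment input — finite offspring variance for the elementary $L^2$ argument above, or the sharp $x\log x$ condition for the fully general statement.
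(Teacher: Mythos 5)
The paper never proves this theorem: it is imported wholesale from the literature (the citations to Kesten and Stigum), so there is no in-paper argument to compare yours against, only the question of whether your sketch is sound. What you propose is the standard Harris/Athreya--Ney route --- Perron--Frobenius structure, the additive martingale, a spectral-gap-plus-second-moment estimate to kill the transverse component, and an $L^2$ or $x\log x$ argument for non-degeneracy --- and the architecture is correct. Your closing observation is also a genuine catch: as stated, the theorem is claimed to follow from Assumptions \ref{ass:supercriticality} and \ref{ass:prandeig} alone, under which $W$ could in principle be degenerate at $0$; some moment input (the $x\log x$ condition, or the finite second moments that only appear later in Assumption \ref{ass:cond}(ii)) is genuinely needed for ``almost surely positive,'' and you are right to isolate that as the real content.

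The one concrete error is a left/right eigenvector transposition. The almost-sure martingale is $\rho^{-n}\bm{Z_n}\cdot\bm{v}$ with $M\bm{v}=\rho\bm{v}$ (right eigenvector), exactly as in your computation; but the limiting \emph{direction} of $\bm{Z_n}$ is the left eigenvector, because the mean dynamics of the column vector $\bm{Z_n}$ are $\E[\bm{Z_{n+1}}\mid\mathcal F_n]=M^{\top}\bm{Z_n}$, whose dominant spectral projection is $\bm{u}\bm{b}^{\top}$ in your notation, not $\bm{b}\bm{u}^{\top}$. The paper confirms this convention later by defining $\bm{b}$ through $b^{\top}M=\rho b^{\top}$ and by the identity $\E(X_i/|X|)=b_i$. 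The correct conclusion of your argument is therefore $\rho^{-n}\bm{Z_n}\to\bigl(\lim_n\rho^{-n}\bm{v}\cdot\bm{Z_n}\bigr)\,\bm{u}$, i.e.\ the scalar prefactor is the martingale limit taken against the \emph{right} eigenvector and the direction is the \emph{left} eigenvector; your version instead projects onto $\bm{b}$ (your right eigenvector) with prefactor $\lim\rho^{-n}\bm{u}\cdot\bm{Z_n}$, which is not the martingale limit, and then identifies it with $W$ by fiat. This is a bookkeeping rather than a conceptual failure --- swap the roles of the two eigenvectors in your second paragraph and everything goes through --- but as written the collapse step lands on the wrong eigenvector, and the ``filling the gaps by monotone control of each coordinate'' remark is unnecessary anyway, since the Chebyshev bound on the transverse component is already summable over all $n$ and full-sequence Borel--Cantelli applies directly.
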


Theorem \ref{thm:KST} follows from assumptions \ref{ass:supercriticality} and \ref{ass:prandeig}, and is of crucial importance for this work. For a proof, see \cite{kurtz1997conceptual}.

The process in \eqref{eq:rec} is observed under family-size sampling: a random sample of $r_n$ individuals from generation $n$ is chosen (without replacement), each reporting their brood size by a vector of length $l$.  We will denote the sample as $\bm{X_{n,1}} ,\ldots ,\bm{X_{n,r_n}}$ or, when the generation $n$ is understood, by $\bm{X_{1}} ,\ldots , \bm{X_{r}}$. Each $\bm{X_{t}}$ corresponds to $ \bm{Y^{(i(t))}_{n-1,s(t)}}$, this is, the family sizes of sampled individual $t$ corresponds to the offspring of individual $s(t)$ of type $i(t)$ in the previous generation. This sampling scheme does not necessarily yield independent data, however, as members of the same family may be observed, thus classical methods to estimate offspring distribution parameters do not apply. 

In the one dimensional formulation of\cite{mm}, the non-sibling sets are introduced: 

\begin{equation}
	\label{eq:nonsibling}
	D_n=\{ \mbox{ all } r_n \mbox{ selected individuals belong to different families}. \}
\end{equation}

This is, a sample $S=\{\bm{X_{1}} ,\ldots , \bm{X_{r}}\} \subset D_n$ whenever $t \neq \tilde{t} \implies (s(\tilde{t}),i(\tilde{t})) \neq (s(t),i(t))$ (no two individuals come from the same parent). When the sample is contained in $D_n$, independence follows and likelihood functions can be written as product of individual probabilities.

Regarding the sampling size and the growth rate for the process, we assume the following:

\begin{ass}\label{ass:cond} 
	The three following conditions hold
	
	\begin{itemize}
		\item[i)] $r_n^2\rho^{-n} \rightarrow 0$.
		\item[ii)] There exists $C \in \mathbb{R}$ such that
		$\E(|Y^{(i)}|^2)\leq C \mbox{ for } i \in \{1, \ldots, l\}$.
		\item[iii)] There exists $K \in \mathbb{R}$ such that
		$\E(|Y^{(i)}|^{-1})\leq K \mbox{ for } i \in \{1, \ldots, l\}$.
	\end{itemize}
	
\end{ass}

The first assumption says, heuristically, that the sampling size cannot be too large in order to observe an independent sample. The second and third assumptions provide bounds useful to obtain estimates on deviations. 

The rest of this work is organized as follows: Section \ref{sec:prelim} provides preliminary results on sampling individuals belonging to the same family, the main results on the asymptotic sampling distribution and estimation are presented in Section \ref{sec:main}, Section \ref{sec:ex} presents a simulation study example using synthetic data from respondent driven sampling and Section \ref{sec:disc} discusses the results of this work. All mathematical proofs are contained in Appendix \ref{sec:proof}.

\section{Sampling siblings}\label{sec:prelim}

We prelude our main results with an analysis of the sampling distribution of two individuals. We argue both
that the contribution of sampled siblings to this probability is asymptotically negligible and that the scenario in which siblings are sampled itself is highly unlikely.

\subsection{Sampling two individuals}\label{sec:ss}

For vectors $\bm{u}, \bm{v} \in \mathbb{N}^l$ we are interested in the probability that, knowing the composition of generation $n$, two individuals in it report family sizes $\bm{u}$ and $\bm{v}$, respectively. This is, in $\P(X_1=\bm{u}, X_2=\bm{v}|\bm{Z_{n}})$ .

\begin{lem}\label{lem:sam2}
	
	The conditional probability of sampling family sizes $\bm{u},\bm{v} \in \mathbb{N}^l$ can be written as
	
	$$\P(X_1=\bm{u},X_2=\bm{v}|\bm{Z_{n}})=\sum_{i=1}^lp_i(\bm{u})\left[a_i(\bm{u},\bm{v})p_i(\bm{v})+2\sum_{j>i}b_i(\bm{u})b_j(\bm{v})p_j(\bm{v})+c_i(\bm{u},\bm{v})\right],$$
	where $$a_i(\bm{u},\bm{v})=\binom{Z_{n-1,i}}{2}\frac{S_{n-1,i}(S_{n-1,i}-1)|\bm{u}||\bm{v}|}{|\bm{Z_{n}}|(|\bm{Z_{n}}|-1)}\E\left[\binom{|\bm{v}|+|\bm{u}|+\sum_{j=3}^{Z_{n-1,i}}|Y_{n-1,j}^{(i)}|}{2}^{-1}\right],$$
	$$b_{i}(\bm{u})=\frac{Z_{n-1,i}S_{n-1,i}|\bm{u}|}{|\bm{Z_{n}}|}\E\left[\left(|\bm{u}|+\sum_{w=2}^{Z_{n-1,i}}|Y_{n-1,w}^{(i)}|\right)^{-1}\right],$$
	and 
	
	$$c_i(\bm{u},\bm{v})=\begin{cases}
	\binom{|\bm{u}|}{2} \frac{S_{n-1,i}Z_{n-1,i}}{|\bm{Z_{n}}|}\E\left(\binom{|\bm{u}|+\sum_{w=2}^{Z_{n-1,i}}|Y_{n-1,w}^{(i)}|}{2}^{-1}\right) & \bm{u}=\bm{v} >1\\
	 & \text{otherwise}
	\end{cases}.$$
\end{lem}

The terms in Lemma \ref{lem:sam2} can be understood, respectively, as: sampling two \textbf{different} families of the same type $i$ ($a_i$), sampling exactly one family of type $i$ ($b_i$) and sampling the same family twice ($c_i$), an event that can only happen if $\bm{u}=\bm{v}$ and $|\bm{u}|>1$. Observe that the event $\{ \bm{u}=\bm{v} \wedge |\bm{u}|>1\}$ in Lemma \ref{lem:sam2} contains but it is not equal to $D_n^c$ in \eqref{eq:nonsibling}, as it merely states that the two sampled individuals come from families of identical size, a sufficient but not necessary condition for siblings.

We consider a heuristic approach to obtain the asymptotics of $\P(X_1=\bm{u}, X_2=\bm{v}|\bm{Z_{n-1}})$ using Corollary \ref{cor:rates} (stated at the beginning of section \ref{sec:proof}): The terms $|\bm{u}|+\sum_{w=2}^{Z_{n-1,i}}|Y_{n-1,w}^{(i)}|$ and $|\bm{u}|+|\bm{v}|+\sum_{w=3}^{Z_{n-1,i}}|Y_{n-1,w}^{(i)}|$ behave asymptotically as $S_{n-1,i}=\sum_{w=1}^{Z_{n-1,i}}|Y_{n-1,w}^{(i)}|$ since

$$\frac{|\bm{u}|+|\bm{v}|+\sum_{w=3}^{Z_{n-1,i}}|Y_{n-1,w}^{(i)}|}{S_{n-1,i}}=1-\frac{|\bm{u}|+|\bm{v}|}{S_{n-1,i}}\approx 1,$$ and furthermore, it can be generalized that for a sample of size $r_n$ that grows not too fast (in a sense that will be made precise later), fixing the sampled family sizes to $\{u_1, \ldots, u_{r_n}\}$, one obtains 

$$\frac{\sum_{w=1}^{r_n}|u_w|+\sum_{w=r_n+1}^{Z_{n-1,i}}|Y_{n-1,w}^{(i)}|}{S_{n-1,i}}=1-\frac{\sum_{w=1}^{r_n}|u_w|}{S_{n-1,i}} \approx 1.$$ 

By the Kesten-Stigum Theorem (Theorem \ref{thm:KST}), and the Continuous Mapping Theorem \cite[Theorem 2.7]{billingsley1999convergence}, the term

$$\binom{Z_{n-1,i}}{2}\frac{S_{n-1,i}(S_{n-1,i}-1)}{|\bm{Z_{n}}|(|\bm{Z_{n}}|-1)}|\bm{u}||\bm{v}|\binom{|\bm{u}|+|\bm{v}|+\sum_{j=3}^{Z_{n-1,i}}|Y_{n-1,j}^{(i)}|}{2}^{-1},$$ which is the contribution by different sampled family sizes $|\bm{u}|$ and $|\bm{v}|$, behaves asymptotically as$|\bm{u}||\bm{v}|b_ib_j\rho^{-{2}}$ , the term

$$\frac{Z_{n-1,i}S_{n-1,i}Z_{n-1,j}S_{n-1,j}}{|\bm{Z_{n}}|^2}\biggl(\frac{|\bm{u}|}{|\bm{u}|+\sum_{w=2}^{Z_{n-1,i}}|Y_{n-1,w}^{(i)}|} \biggr)^2,$$
which is the contribution by different sampled families of equal size $|\bm{u}|$, behaves asymptotically as $|\bm{u}|^2b_i^2\rho^{-{2}}$, and lastly the term

$$\binom{|\bm{u}|}{2}\binom{|\bm{u}|+\sum_{w=2}^{Z_{n-1,i}}|Y_{n-1,w}^{(i)}|}{2}^{-1} \frac{Z_{n-1,i}S_{n-1,i}}{|\bm{Z_{n}}|},$$
the contribution by sampling the same family of size $|\bm{u}|$ twice, behaves as $\frac{b_i|\bm{u}|^2}{2\rho S_{n-1,i}} $, which vanishes almost surely as $n$ grows (regardless of the size of $\bm{u}$), thus suggesting that its contribution is negligible and that the probability of sampling families of two given sizes behaves  asymptotically as

$$\P(X_1=\bm{u},X_2=\bm{v})\rightarrow |\bm{u}||\bm{v}|\rho^{-{2}}\sum_{i=1}^l \sum_{j=1}^l p_i(\bm{u})p_j(\bm{v}) b_ib_j .$$

Formalizing this heuristic development would be tedious and notation heavy when sampling $r_n >>2$ individuals. To formally prove convergence of the sampling distribution for a sample of arbitrary size, we first show that, under mild assumptions on $r_n$, the absence of siblings in a sample is highly likely.

\subsection{The probability of selecting different families.}\label{sec:sdf}

The probability of selecting $r_n$ different families can be written as the expected value of a ratio between the total number of possible choices of $r_n$ individuals from  different families, $\sum_{s \in \mathbb{N}^l: |s|=r_n} \prod_{i=1}^{l}\binom{Z_{n-1,i}}{s_i}\prod_{j=1}^{s_i}|Y^{(i)}_j|$, and the total number of unrestricted choices of $r_n$ individuals from the overall population, $\binom{|\bm{Z_{n}}|}{r_n}$; this is, the probability of the non-sibling set $D_n$ in \eqref{eq:nonsibling} is

$$\P(D_n)=\E\Biggl(\biggl[\sum_{s \in \mathbb{N}^l: |s|=r_n} \prod_{i=1}^{l}\binom{Z_{n-1,i}}{s_i}\prod_{j=1}^{s_i}|Y^{(i)}_j|\biggr]\binom{|\bm{Z_{n}}|}{r_n}^{-1}\Biggr).$$

\begin{thm}\label{thm:assympDn}
	Under assumptions \ref{ass:supercriticality}, \ref{ass:prandeig} and \ref{ass:cond}, it holds that
	
	\begin{itemize}
		\item $\P(D_n)\rightarrow 1$ as $n\rightarrow \infty.$
		\item For $\alpha \in (0, -\log_\rho K)$, (with $K$ given by assumption \ref{ass:cond})
		$$\rho^{\alpha n}r_n^{-2}(1-\P(D_n))\rightarrow 0 \mbox{ as } n\rightarrow \infty.$$
	\end{itemize}
\end{thm}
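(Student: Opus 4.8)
The plan is to work directly with the exact expression for $\P(D_n)$ given just before the statement, namely
$$\P(D_n)=\E\left(\left[\sum_{s \in \mathbb{N}^l: |s|=r_n} \prod_{i=1}^{l}\binom{Z_{n-1,i}}{s_i}\prod_{j=1}^{s_i}|Y^{(i)}_j|\right]\binom{|\bm{Z_{n}}|}{r_n}^{-1}\right),$$
and to show that the bracketed random ratio converges to $1$ in a controlled way. First I would rewrite the numerator: $\sum_{|s|=r_n}\prod_i \binom{Z_{n-1,i}}{s_i}\prod_{j=1}^{s_i}|Y^{(i)}_j|$ is (up to the factor $r_n!$) the coefficient extraction / elementary-symmetric-type quantity counting ordered selections of $r_n$ distinct parents weighted by their brood sizes, while $\binom{|\bm{Z_n}|}{r_n}$ times $r_n!$ counts all ordered selections of $r_n$ distinct individuals from generation $n$. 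Thus $1-[\,\cdot\,]$ is the conditional probability (given $\bm{Z_{n-1}}$ and all brood sizes) that at least one sibling pair is chosen. The cleanest route is a union bound: conditional on $\bm{Z_{n-1}}$ and the brood-size vectors, the probability that a \emph{specific} pair of the $r_n$ sampled slots lands on two siblings is at most $\binom{S_{n-1,i}}{2}\big/\binom{|\bm{Z_n}|}{2}$ summed appropriately over parents $i$, so by the union bound over the $\binom{r_n}{2}$ slot-pairs,
$$1-\P(D_n)\ \le\ \binom{r_n}{2}\,\E\!\left(\frac{\sum_{i=1}^{l}\sum_{k=1}^{Z_{n-1,i}}\binom{|\bm{Y^{(i)}_{n-1,k}}|}{2}}{\binom{|\bm{Z_n}|}{2}}\right).$$
I would make this inequality rigorous by the standard first-moment argument on the exact ratio (the difference $1-[\,\cdot\,]$ telescopes into a sum of per-pair sibling terms bounded as above); Lemma~\ref{lem:sam2} with $u=v$ essentially already isolates the two-individual same-family contribution, so one can alternatively quote that.

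The next step is to bound the expectation. On the event of Theorem~\ref{thm:KST}, $|\bm{Z_n}|\rho^{-n}\to W$ a.s.\ with $W>0$, so $\binom{|\bm{Z_n}|}{2}\sim \tfrac12 W^2\rho^{2n}$; and $\sum_i\sum_k \binom{|\bm{Y^{(i)}_{n-1,k}}|}{2}\le \tfrac12\sum_i\sum_k |\bm{Y^{(i)}_{n-1,k}}|^2$, whose conditional expectation given $\bm{Z_{n-1}}$ is $\le \tfrac{C}{2}|\bm{Z_{n-1}}|$ by Assumption~\ref{ass:cond}(ii). Hence the ratio is of order $|\bm{Z_{n-1}}|/|\bm{Z_n}|^2 \asymp \rho^{-n}$ (using $|\bm{Z_n}|\asymp\rho^{-1}|\bm{Z_{n-1}}|\cdot\rho$, i.e.\ $|\bm{Z_n}|/|\bm{Z_{n-1}}|\to\rho$). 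To turn the a.s.\ asymptotics into a bound on the \emph{expectation} one cannot just multiply pointwise, so I would instead bound $\binom{|\bm{Z_n}|}{2}^{-1}\le \binom{|\bm{Z_{n-1}}|}{2}^{-1}$ (since $|\bm{Z_n}|\ge|\bm{Z_{n-1}}|$ because no brood is the zero vector, so $|\bm{Z_n}|=\sum_i S_{n-1,i}\ge |\bm{Z_{n-1}}|$), giving
$$1-\P(D_n)\ \le\ \binom{r_n}{2}\cdot\frac{C}{2}\,\E\!\left(\frac{|\bm{Z_{n-1}}|}{\binom{|\bm{Z_{n-1}}|}{2}}\right)\ \le\ C\,r_n^2\,\E\!\left(|\bm{Z_{n-1}}|^{-1}\right)\cdot(1+o(1)).$$
Here the conditioning is handled by the tower property: $\E(\text{ratio})=\E\big(\E(\text{numerator}\mid\bm{Z_{n-1}})\cdot(\text{denominator in }\bm{Z_{n-1}})\big)$. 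Combining with Assumption~\ref{ass:cond}(i) ($r_n^2\rho^{-n}\to 0$) and Lemma~\ref{lem:aux} ($\rho^{\alpha n}\E(|\bm{Z_n}|^{-1})\to 0$ for $\alpha\in(0,-\log_\rho K)$, which gives $\E(|\bm{Z_{n-1}}|^{-1})=o(\rho^{-\alpha n})$) immediately yields both claims: $1-\P(D_n)\le C r_n^2\E(|\bm{Z_{n-1}}|^{-1})\to 0$ for the first, and $\rho^{\alpha n}r_n^{-2}(1-\P(D_n))\le C\,\rho^{\alpha n}\E(|\bm{Z_{n-1}}|^{-1})\to 0$ for the second (note $r_n^{-2}\le 1$ eventually, or simply that the $r_n^2$ cancels).

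The main obstacle I anticipate is making the union-bound reduction fully rigorous at the level of the exact combinatorial ratio rather than a heuristic "probability of a bad pair." Concretely, one must verify that
$$1-\left[\sum_{|s|=r_n}\prod_{i}\binom{Z_{n-1,i}}{s_i}\prod_{j=1}^{s_i}|Y^{(i)}_j|\right]\binom{|\bm{Z_n}|}{r_n}^{-1}\ \le\ \binom{r_n}{2}\,\frac{\sum_i\sum_k\binom{|\bm{Y^{(i)}_{n-1,k}}|}{2}}{\binom{|\bm{Z_n}|}{2}}$$
holds pointwise (given all brood sizes). This is a purely deterministic inclusion–exclusion / sampling-without-replacement estimate: the left side is the conditional probability that an $r_n$-subset of the $|\bm{Z_n}|$ individuals, chosen uniformly, contains two siblings, and the right side is the Bonferroni first-moment upper bound for that event. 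I would prove it by writing the complement event as a union over unordered slot-pairs of the events "slots $a,b$ are siblings" and applying the union bound, being careful that sampling is without replacement (which only helps, as the without-replacement pair probability is bounded by the with-replacement one $\sum_i S_{n-1,i}(S_{n-1,i}-1)/(|\bm{Z_n}|(|\bm{Z_n}|-1))$). A secondary technical point is justifying the use of Lemma~\ref{lem:aux} to control $\E(|\bm{Z_{n-1}}|^{-1})$ and confirming the shift from $n$ to $n-1$ costs only a constant factor $\rho^{\alpha}$; and one should double check the inequality $|\bm{Z_n}|\ge|\bm{Z_{n-1}}|$, which rests on Assumption~\ref{ass:supercriticality}(i) ($p_j(\bm 0)=0$, so $|\bm{Y^{(i)}}|\ge 1$ always).
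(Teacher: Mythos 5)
Your union-bound (Bonferroni) reduction is a genuinely different, and in fact cleaner, route than the paper's: the paper instead sandwiches $\P(D_n\mid\bm{Z_{n-1}})$ between the corresponding probabilities for offspring laws degenerate at the extreme support points and handles unbounded supports with a truncation event $G_n=\{|\bm{Y^{(i)}_{n-1,j}}|\le|\bm{Z_{n-1}}|\}$ controlled via Chebyshev and Assumption \ref{ass:cond}(ii). Your pairwise bound $1-\P(D_n\mid \bm{Z_{n-1}},\text{broods})\le\binom{r_n}{2}\sum_{i,k}\binom{|\bm{Y^{(i)}_{n-1,k}}|}{2}\big/\binom{|\bm{Z_{n}}|}{2}$ is correct, the replacement $\binom{|\bm{Z_{n}}|}{2}\ge\binom{|\bm{Z_{n-1}}|}{2}$ is legitimate except on $\{|\bm{Z_{n-1}}|=1\}$ (where you divide by zero; since $\P(|\bm{Z_{n-1}}|=1)\le\E(|\bm{Z_{n-1}}|^{-1})$ this event folds into your estimate, but it must be split off), and your derivation of the second bullet is sound: after multiplying by $\rho^{\alpha n}r_n^{-2}$ the factor $r_n^2$ cancels and Lemma \ref{lem:aux} finishes.

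The genuine gap is in the first bullet. You assert that $C\,r_n^2\,\E(|\bm{Z_{n-1}}|^{-1})\to 0$ follows ``immediately'' from Assumption \ref{ass:cond}(i) and Lemma \ref{lem:aux}, but those give only $r_n^2=o(\rho^{n})$ and $\E(|\bm{Z_{n-1}}|^{-1})=o(\rho^{-\alpha n})$ for $\alpha<-\log_\rho K$; the product is then $o(\rho^{(1-\alpha)n})$, which is vacuous unless $\alpha\ge 1$, i.e.\ $K\le\rho^{-1}$ --- and that is not assumed. This is not mere bookkeeping: harmonic moments of supercritical processes can decay strictly slower than $\rho^{-n}$ (already $\E(|\bm{Z_{n}}|^{-1})\ge\P(|\bm{Z_{n}}|=1)=q^n$ with $q$ the probability of a single offspring at each step, and $q>1/\rho$ is perfectly compatible with Assumptions \ref{ass:supercriticality}--\ref{ass:cond}), so for $r_n^2$ growing close to $\rho^n$ your upper bound on $1-\P(D_n)$ genuinely diverges even though the quantity it bounds tends to $0$. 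The bound is too lossy because you discard the trivial cap $1-\P(D_n\mid\bm{Z_{n-1}})\le 1$ before integrating. The repair is to stay at the conditional level: $1-\P(D_n\mid\bm{Z_{n-1}})\le\min\{1,\;C'r_n^2/(|\bm{Z_{n-1}}|-1)\}$ is $[0,1]$-valued and tends to $0$ almost surely by Lemma \ref{lem:KSc}, so dominated convergence gives the first bullet; the unconditional expectation bound should be reserved for the second bullet, where the $r_n^{-2}$ prefactor saves it. With that modification both claims go through on your union-bound skeleton.
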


The first result in Theorem \ref{thm:assympDn} supports dismissing the case of sampled siblings in the analysis as, in addition to having a negligible contribution in the sampling probability as argued in section \ref{sec:ss}, its probability $\P(D_n^c)$ shrinks to 0, with the second result in Theorem \ref{thm:assympDn} further assuring that this decay is faster than $K^nr_n^2$. 

\section{Asymptotic sampling distribution and estimation.}\label{sec:main}

\subsection{The limiting sampling distribution.}

The heuristic result developed in section \ref{sec:ss} suggests that, as $n \to \infty$, one observes

$$\P(X_1=\bm{u},X_2=\bm{v})\approx \frac{|\bm{u}||\bm{v}|}{\rho^2}\sum_{i=1}^l\sum_{j=1}^lp_i(\bm{u})p_j(\bm{v})b_ib_j.$$

The goal of this section is to generalize this result for the entire sample, provided $r_n$ is not too large as a function of $n$. A natural candidate for an asymptotic sampling distribution of $(X_1 ,\ldots , X_{r_n})$ is the distribution of $r_n$ independent and identically distributed random variables, each having probability mass function in $\mathbb{N}^l$ given by 

\begin{equation} \label{eq:sbias}
	p_S(u)=\frac{|\bm{u}|}{\rho}\sum_{i=1}^l p_i(\bm{u})b_i.
\end{equation}

Note that each of the summands in $p_S$ in \eqref{eq:sbias} resembles the size biased distribution proposed by \cite{maki1996role,patil1976size}, with  $p_S$ being a weighted average of the offspring distributions of the $l$ types by their (asymptotic) proportion given by the Kesten-Stigum theorem \ref{thm:KST}. Size-biased distributions are weighted in such a way that the probability of sampling an item is proportional to its size or magnitude \cite{mir2009pak} and therefore are ommonly encountered in probabilistic models for statistical analysis in lifetime \cite{al2019size}, forestry \cite{gove2003estimation}, genetics \cite{pennell2012trees}, among many other fields.

We prove convergence in law to the candidate distribution. We begin by establishing asymptotic convergence for a fixed set of observations:

\begin{thm}\label{thm:iid}
	Let $r>0$. Under assumptions \eqref{ass:supercriticality}, \eqref{ass:prandeig} and \eqref{ass:cond}, for a sample of size $r_n=r$ of family sizes in generation $n$, as $n \rightarrow \infty$

	$$\P(X_w=\bm{u}_w; w\in\{1, \ldots ,r_n\})\rightarrow \prod_{w=1}^{r_n}p_S(u_w).$$

\end{thm}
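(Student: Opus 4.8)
The plan is to prove the statement for a fixed sample size $r_n = r$ by first conditioning on the non-sibling event $D_n$, then computing the conditional sampling distribution on $D_n$ explicitly, and finally passing to the limit using the Kesten--Stigum Theorem \ref{thm:KST}. Since $r$ is fixed, assumption \ref{ass:cond}(i) gives $r_n^2 \rho^{-n} \to 0$ trivially, and Theorem \ref{thm:assympDn} applies with this choice of $r_n$, so $\P(D_n) \to 1$. I would write
$$\P(X_w = u_w;\, w \in \{1,\dots,r\}) = \P(\{X_w = u_w\} \cap D_n) + \P(\{X_w = u_w\} \cap D_n^c),$$
and observe that the second term is bounded above by $1 - \P(D_n) \to 0$, so it suffices to analyze the first term.

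The main work is in the first term. On $D_n$, conditionally on $\bm{Z_{n-1}}$ and on the offspring vectors $\{\bm{Y^{(i)}_{n-1,k}}\}$, selecting $r$ individuals from distinct families without replacement means choosing an ordered assignment of the $r$ sampled individuals to $r$ distinct parent slots, weighted by the brood-size-biased probability of picking one individual from each chosen family. Using the elementary combinatorics already set up before Theorem \ref{thm:assympDn}, I would write $\P(\{X_w = u_w\} \cap D_n \mid \bm{Z_{n-1}})$ as an expectation over the offspring configuration of a sum, over assignments of types $i_1,\dots,i_r$ to the $r$ sampled slots, of a product $\prod_{w=1}^r p_{i_w}(u_w)$ times a combinatorial ratio of the form
$$\frac{(\text{number of ways to pick distinct parents of the prescribed types})\cdot \prod_{w=1}^r |u_w|}{|\bm{Z_n}|(|\bm{Z_n}|-1)\cdots(|\bm{Z_n}|-r+1)} \cdot (\text{correction factors}),$$
exactly generalizing the structure of Lemma \ref{lem:sam2}. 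The correction factors are ratios like those discussed in the heuristic in Section \ref{sec:ss}: each denominator $|u_w| + \sum_{\text{remaining}} |\bm{Y}|$ differs from the corresponding $S_{n-1,i_w}$ by the bounded quantity $\sum_{w} |u_w|$, so each such ratio tends to $1$ almost surely (here Lemma \ref{lem:KSc} and the Kesten--Stigum growth rate control the denominator). The leading combinatorial factor, after this reduction, is asymptotically
$$\prod_{w=1}^r \frac{Z_{n-1,i_w} S_{n-1,i_w}}{|\bm{Z_n}|} \;\longrightarrow\; \prod_{w=1}^r \frac{b_{i_w}}{\rho}$$
almost surely by Theorem \ref{thm:KST} (since $\bm{Z_{n-1}} \rho^{-(n-1)} \to \bm{b}W$, $S_{n-1,i}/|\bm{Z_{n-1}}|$ behaves like $\mu_{i}$-weighted sums converging appropriately, and $|\bm{Z_n}|\rho^{-n} \to W$, with the $W$'s cancelling). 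Summing over the type assignments $i_1,\dots,i_r$ factors as
$$\prod_{w=1}^r \left( \frac{|u_w|}{\rho} \sum_{i=1}^l p_i(u_w) b_i \right) = \prod_{w=1}^r p_S(u_w),$$
which is the claimed limit. To conclude I would take expectations over $\bm{Z_{n-1}}$ and justify the interchange of limit and expectation: the integrand is bounded (it is a conditional probability, hence in $[0,1]$), so the bounded convergence theorem applies.

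The main obstacle I anticipate is the bookkeeping in showing that the collection of correction-factor ratios converges to $1$ \emph{uniformly enough} to be pulled out of the sum over type assignments, and in verifying that the cross-terms arising from different parents (analogous to the $j > i$ term in Lemma \ref{lem:sam2}) all have the same $\prod b_{i_w}/\rho$ limit. A clean way to handle this is to bound the entire conditional probability on $D_n$ from above and below by expressions differing only by a factor $(1 + \sum_w|u_w|/\min_i S_{n-1,i})^{\pm C_r}$ for a constant $C_r$ depending only on $r$, then note $\min_i S_{n-1,i} \to \infty$ almost surely (each $S_{n-1,i}$ grows like $\rho^{n}$ by positive regularity and Theorem \ref{thm:KST}), so both bounds share the limit $\prod_{w=1}^r p_S(u_w)$, and apply bounded convergence. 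This squeeze argument avoids any delicate uniformity claims and reduces everything to the almost-sure limits supplied by Theorems \ref{thm:KST} and \ref{thm:assympDn} together with Lemmas \ref{lem:KSc} and \ref{lem:aux}.
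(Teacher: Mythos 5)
Your proposal follows essentially the same route as the paper's proof: split off $D_n^c$ using Theorem \ref{thm:assympDn}, expand $\P(\{X_w=u_w\}\cap D_n)$ as a sum over assignments of parent types to the $r$ sampled slots, show each combinatorial ratio is bounded and converges almost surely to $\prod_{w}b_{i_w}/\rho$ via Corollary \ref{cor:rates}, apply bounded convergence, and re-factor the sum over assignments into $\prod_{w=1}^{r}p_S(u_w)$. (The displayed leading factor $\prod_{w}Z_{n-1,i_w}S_{n-1,i_w}/|\bm{Z_n}|$ should read $\prod_{w}Z_{n-1,i_w}/|\bm{Z_n}|$ once each $S_{n-1,i_w}$ has been cancelled against the corresponding family-size denominator, but the surrounding discussion makes the intent clear.)
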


Theorem \ref{thm:asiid} formalizes convergence in distribution for $r_n=o(\rho^n)$ (i.e. verifying Assumption 3.i) via convergence of characteristic functions. The characteristic function of a random vector $Y$ is defined as the expected value of the complex exponential $\exp \{ -i\langle \bm{s},Y\rangle \}$ as a function of $\bm{s}$, where $\langle\cdot,\cdot\rangle$ is the usual inner product of $\mathbb{R}^l$, and $|\cdot|$ here is understood as the complex norm $|a+ib|^2=a^2+b^2$.

\begin{thm}\label{thm:asiid}
	Denote $\bm{t} =(\bm{t_1},\ldots , \bm{t_{r_n}} )$ with $\bm{t} \in \mathbb{R}^{l\times r_n}$  and all of the $\bm{t_i} \in \mathbb{R}^{l}$. Let $\phi_n$ be the characteristic function of a vector $\bm{X}=(X_1, \ldots , X_{r_n})$ of $r_n$ sampled family sizes. Let $\phi$ be the characteristic function of $r_n$ iid random vectors with distribution $p_S$ in \eqref{eq:sbias}. Under Assumptions \ref{ass:supercriticality}, \ref{ass:prandeig} and \ref{ass:cond}, as $n\rightarrow \infty$, uniformly for $\bm{t}$ in an open neighborhood of $\bm{0} \in \mathbb{R}^{l\times r_n}$ it holds that
	
	$$|\phi_n(\bm{t})-\phi(\bm{t})| \rightarrow 0.$$
	
\end{thm}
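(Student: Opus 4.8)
The plan is to reduce the statement to an $L^1$ (total-variation) estimate on the joint law of the sample; this is in fact stronger than what is asked, and it makes the dependence on $\bm{t}$ disappear. Write $\bm{u}=(u_1,\dots,u_{r_n})\in(\mathbb{N}^l)^{r_n}$ and $\pi_n(\bm{u})=\P(X_1=u_1,\dots,X_{r_n}=u_{r_n})$. Expanding $\phi_n$ and $\phi$ as sums over $\bm{u}$ and using $|e^{-i\langle\bm{t},\bm{u}\rangle}|=1$,
\[
|\phi_n(\bm{t})-\phi(\bm{t})|\;\le\;\sum_{\bm{u}\in(\mathbb{N}^l)^{r_n}}\bigl|\pi_n(\bm{u})-p_S(u_1)\cdots p_S(u_{r_n})\bigr|,
\]
so it suffices to show the right-hand side tends to $0$; the convergence is then uniform over all $\bm{t}$, in particular on a neighbourhood of $\bm{0}$. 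One preliminary point is that $p_S$ in \eqref{eq:sbias} is a probability mass function, so that $p_S(u_1)\cdots p_S(u_{r_n})$ really is one on $(\mathbb{N}^l)^{r_n}$: using $\bm{b}M=\rho\bm{b}$, $|\bm{b}|=1$, and $\E|\bm{Y^{(i)}}|=\sum_k m_{ik}$, one gets $\sum_u p_S(u)=\rho^{-1}\sum_i b_i\,\E|\bm{Y^{(i)}}|=\rho^{-1}\sum_k(\bm{b}M)_k=\sum_k b_k=1$.

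For $r_n=r$ fixed---the setting of Theorem~\ref{thm:iid}---the right-hand side of the display vanishes by Scheff\'e's lemma: Theorem~\ref{thm:iid} gives $\pi_n(\bm{u})\to p_S(u_1)\cdots p_S(u_r)$ pointwise for every $\bm{u}\in(\mathbb{N}^l)^r$, both sides are probability mass functions on the countable set $(\mathbb{N}^l)^r$, and pointwise convergence of a mass function to a mass function forces $L^1$ convergence. Hence $\sup_{\bm{t}}|\phi_n(\bm{t})-\phi(\bm{t})|\to0$. A Scheff\'e-free variant, which also shows where Assumption~\ref{ass:cond}(ii) enters, is a truncation: fix $N$, split the sum into $\{\max_w|u_w|\le N\}$ (finitely many terms, each $\to0$ by Theorem~\ref{thm:iid}) and its complement, and bound the complement uniformly in $n$ by $\P(\max_w|X_w|>N)+\sum_w\P_{p_S}(|X_w|>N)\le r(C+C\rho^{-1})N^{-1}$; here $\E|X_w|\le C$ because, conditionally on $\bm{Z_{n-1}}$, a uniformly sampled generation-$n$ individual has expected brood size $\E\bigl(|\bm{Z_{n}}|^{-1}\sum_{i,s}|\bm{Y^{(i)}_{n-1,s}}|^2\mid\bm{Z_{n-1}}\bigr)\le\sum_i\tfrac{Z_{n-1,i}}{|\bm{Z_{n-1}}|}\E|\bm{Y^{(i)}}|^2\le C$ (using $|\bm{Y^{(i)}}|\ge1$ from Assumption~\ref{ass:supercriticality}(i), so $|\bm{Z_{n}}|\ge|\bm{Z_{n-1}}|$, and Assumption~\ref{ass:cond}(ii)), while $\sum_u|u|p_S(u)=\rho^{-1}\sum_i b_i\E|\bm{Y^{(i)}}|^2\le C\rho^{-1}$; now let $n\to\infty$ and then $N\to\infty$.

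The genuine obstacle is not the fixed-$r$ case, where the result is essentially a corollary of Theorem~\ref{thm:iid}, but carrying it through when $r_n\to\infty$: then the truncation/Scheff\'e route collapses, since $(\mathbb{N}^l)^{r_n}$ has growing dimension and the tail bound $r_n(C+C\rho^{-1})N^{-1}$ no longer vanishes for fixed $N$. One must instead argue conditionally: split $\phi_n(\bm{t})$ over $D_n$ and $D_n^c$, with Theorem~\ref{thm:assympDn} disposing of $D_n^c$ uniformly; on $D_n$ and conditionally on $\bm{Z_{n-1}}$, the sampled coordinates are asymptotically independent because the without-replacement and distinct-family constraints are negligible when $r_n^2/|\bm{Z_{n-1}}|\to0$ (Lemma~\ref{lem:KSc}), each having conditional characteristic function $|\bm{Z_{n}}|^{-1}\sum_{i,s}|\bm{Y^{(i)}_{n-1,s}}|e^{-i\langle\bm{t_w},\bm{Y^{(i)}_{n-1,s}}\rangle}$, which converges to the characteristic function of $p_S$ by the Kesten--Stigum Theorem~\ref{thm:KST} and the rate estimates of Corollary~\ref{cor:rates}. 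The delicate step is then to control a product of $r_n$ nearly-identical complex factors; near $\bm{t}=\bm{0}$ each factor lies within $O(|\bm{t_w}|^2)$ of $1$ with a constant governed by $\sup_i\E|\bm{Y^{(i)}}|^2\le C$ (Assumption~\ref{ass:cond}(ii)), and it is precisely this uniform quadratic control, together with the asymptotic-independence estimate, that I expect to be the technical heart of the argument, and that also explains why the conclusion is stated on a neighbourhood of the origin rather than globally.
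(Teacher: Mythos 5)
Your reduction to the total-variation bound $\sup_{\bm{t}}|\phi_n(\bm{t})-\phi(\bm{t})|\le\sum_{\bm{u}}|\pi_n(\bm{u})-\prod_w p_S(u_w)|$ is sound, and for fixed $r$ the Scheff\'e argument (or your truncation variant) does close the proof; that part is a clean alternative route, since the paper never invokes Scheff\'e. But the content of this theorem beyond Theorem \ref{thm:iid} is precisely the case $r_n\to\infty$ (the paper's own proof explicitly uses $r_n\to\infty$ in its final step), and there your argument stops at a plan: you correctly identify that the Scheff\'e/truncation route collapses in growing dimension and that one must control a product of $r_n$ nearly-identical factors, but you do not supply the estimate that makes this work --- you explicitly defer it as ``the technical heart of the argument.'' That estimate is the missing idea, so the proposal has a genuine gap.

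What the paper does instead: it compares $\phi$ not with $\phi_n$ directly but with $\psi_n(\bm{t})=\E(1(D_n)e^{-i\langle\bm{t},\bm{X}\rangle})$ (the $D_n^c$ part is disposed of by Theorem \ref{thm:assympDn}, as you anticipated), writes $\phi-\psi_n=\sum_{f}\sum_{\bm{u}}e^{-i\langle\bm{t},\bm{u}\rangle}A(\bm{u},f,n)V(\bm{u},f,n)$, where $A=\prod_w|u_w|p_{f(w)}(u_w)\rho^{-1}$ and $V$ is the discrepancy between $\prod_w b_{f(w)}$ and the exact conditional selection weight, and then proves the uniform quantitative bound $|V(\bm{u},f,n)|\le\exp\bigl(-\sum_k|u_k|+r_n\bigr)$. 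Summed against $A$, this yields $|\phi(\bm{t})-\psi_n(\bm{t})|\le\E\bigl(e^{-|B_1|+1}\bigr)^{r_n}$ with $B_1\sim p_S$, which tends to $0$ because Assumption \ref{ass:supercriticality}(ii) forces $\P(|B_1|>1)>0$, hence $\E(e^{-|B_1|+1})<1$, and $r_n\to\infty$. Note that this bound is uniform in all of $\bm{t}$, not just near the origin, so your diagnosis that the neighbourhood restriction reflects a local quadratic control of individual characteristic-function factors does not match the paper; the restriction is only what L\'evy's continuity theorem requires. To complete your conditional route you would have to supply exactly such a per-coordinate bound whose product over $r_n$ factors still vanishes; pointwise convergence of each factor, which is all Theorem \ref{thm:iid} gives you, is not enough.
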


Theorem \ref{thm:asiid} guarantees that the sampling distribution of the family sizes converges to the distribution of as many independent random variables with distribution given by $p_S$ thanks to Levy's continuity theorem (see, for example, \cite{Williams1991ProbabilityWM}). It is therefore established that a sample of $r_n$ family sizes behaves asymptotically as an independent and identically distributed sample from the size biased distribution $p_S$ in \eqref{eq:sbias}.

\subsection{Asymptotic maximum likelihood estimation.}

Theorem \ref{thm:asiid} suggests that, for parametric estimation purposes, one could obtain maximum likelihood estimates by maximizing the asymptotic likelihood, which behaves like a product of marginal distributions in the limit. For $\{p_1, \ldots, p_l\}$ in a parametric family $\{p_1(\theta ), \ldots, p_l(\theta ): \theta \in \Theta \subset \mathbb{R}^s \}$ and a sample of family sizes $X_1, \ldots X_r$, maximum likelihood estimates can be obtained by maximizing 

$$\hat{\theta}_{ML}=\mbox{argmax}_{\theta \in \Theta} \prod_{j=1}^r p_S(X_j)=\mbox{argmax}_{\theta \in \Theta} \biggl[\rho^{-r} \prod_{j=1}^r |X_j|\sum_{i=1}^lb_ip_i(X_j)\biggr].$$

Imposing first order conditions for $\hat{\theta}_{ML}$, one has for $\theta=(\theta_1 ,\ldots , \theta_s)$ the set of $s$ equations given by

\begin{equation}\label{eq:amle}
	\sum_{j=1}^r \Biggl( \frac{\sum_{i=1}^l\frac{\partial p_i}{\partial \theta_d}(X_j)b_i+\frac{\partial b_i}{\partial \theta_d}p_i(X_j)}{\sum_{i=1}^l p_i(X_j)b_i}\Biggr)=\frac{r}{\rho}\frac{\partial \rho}{\partial \theta_d},  \quad d \in \{1,\ldots , s\}  .
\end{equation}

Several contributions have been made regarding estimation of $M$ and $\rho$ \cite{eigen1,chi2004,li2011note}. When the task at hand is parametric estimation for the offspring distributions of each the several types, explicit expressions for $\rho(\theta)$ and $\bm{b}(\theta)$ are required. When they are available, asymptotic normality is guaranteed under mild conditions, as per Theorem \ref{thm:amle}:

\begin{thm} \label{thm:amle}
	Denote by $\tilde{\theta}\in \mathbb{R}^s$ the true parameter of $p_1, \ldots, p_l$ from which $X_1, \ldots X_{r_n}$ are sampled. If $\rho(\theta)$, $\bm{b}(\theta)$ and $p_1(\theta ), \ldots, p_l(\theta)$ are all twice continuously differentiable in $\theta$ and the support of the offspring distributions $p_1, \ldots, p_l$ does not depend on $\theta$, as $r_n \to \infty$ for each $d \in \{1,\ldots , s\}$ 
	
	$$\sqrt{r_n}(\tilde{\theta}_d-(\hat{\theta}_{ML})_d) \rightarrow_\mathcal{L} \mathcal{N}\Biggl(0,\Biggl[\sum_{\bm{u}: p_S(\bm{u})>0}\Bigl(\frac{\partial p_S (\bm{u})}{\partial \theta_d}\Bigr)^2 \frac{1}{p_S (\bm{u})} \Biggr]^{-1} \Biggr).$$
\end{thm}

\subsection{Asymptotic method of moments estimates}

A second approach to parametric estimation is to provide method of moments estimates for  the largest eigenvalue and corresponding normalized eigenvector of the reproducing matrix $M$, built based on the result of theorem \ref{thm:asiid}. For a random vector $X$ in $\mathbb{N}^l$ with probability mass function $p_S$, one has
\begin{equation}
	\E(|X|^{-1})=\frac{1}{\rho} \quad \text{ and } \quad
	\E\Biggl(\frac{X_i}{|X|} \Biggr)=\frac{1}{\rho}\sum_{j=1}^lb_jM_{ji}=\frac{b_i\rho}{\rho}=b_i.
	\label{eq:mmvect}
\end{equation}

For a sample of family sizes, thanks to Theorem \ref{thm:asiid} and the Central Limit Theorem, the statistics

\begin{equation}\label{eq:mmest}
T_n=\frac{1}{r_n}\sum_{j=1}^{r_n}\frac{1}{|X_j|}\quad \text{ and } \quad \bm{U_n}=\frac{1}{r_n}\sum_{j=1}^{r_n}\frac{\bm{X_j}}{|X_j|},
\end{equation} are consistent and asymptotically normal estimators of $\rho$ and $\bm{b}$, respectively.

\begin{thm} \label{thm:moments} As $r_n \rightarrow \infty$,  $$\sqrt{r_n}(T_n^{-1}-\rho)\rightarrow_\mathcal{L} \mathcal{N}_1(0,\sigma_T^2\rho^4)\quad \text{ and } \quad\sqrt{r_n}(\bm{U_n}-b)\rightarrow_\mathcal{L} \mathcal{N}_l(\bm{0}, \Sigma).$$
	
	Where $\mathcal{L}$ denotes convergence in law, $\mathcal{N}_q$ is a $q$ dimensional multivariate normal,
	$$\sigma^2_T=\rho^{-1}\Biggl(\sum_{v \in \mathbb{N}^l}\sum_{k=1}^l \frac{b_kp_k(v)}{|\bm{v}|}-\rho^{-1} \Biggr) \text{, and } \Sigma_{i,j}=\rho^{-1}\sum_{v \in \mathbb{N}^l}\sum_{k=1}^l \frac{b_kp_k(v)v_iv_j}{|\bm{v}|} - b_ib_j.$$
\end{thm}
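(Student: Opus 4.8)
The plan is to establish a central limit theorem for empirical averages of \emph{bounded} functions of the sampled family sizes, and then read off both claims by a direct moment computation together with the delta method for the first one and the Cram\'er--Wold device for the second. The two functions at play are $g_0(x)=1/|x|$ and $g_i(x)=x_i/|x|$ for $i\in\{1,\dots,l\}$; both are bounded on the support of $p_S$, since every offspring vector has $|x|\ge 1$ by Assumption \ref{ass:supercriticality}(i) and $0\le x_i\le |x|$. I would first record, by a one-line computation from \eqref{eq:sbias}, that under $p_S$ one has $\E_{p_S}(g_0)=\rho^{-1}$ and $\E_{p_S}(g_i)=b_i$ (these are \eqref{eq:mmrho} and \eqref{eq:mmvect}) and
$$\E_{p_S}(g_0^2)=\rho^{-1}\sum_{v\in\mathbb N^l}\sum_{k=1}^l\frac{b_kp_k(v)}{|v|},\qquad \E_{p_S}(g_ig_j)=\rho^{-1}\sum_{v\in\mathbb N^l}\sum_{k=1}^l\frac{b_kp_k(v)v_iv_j}{|v|},$$
all of these sums being finite because $1/|v|\le 1$ so that $\sum_v\sum_k b_kp_k(v)|v|^{-1}\le\sum_k b_k=1$. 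Hence $\mathrm{Var}_{p_S}(g_0)=\sigma_T^2$ and $\mathrm{Cov}_{p_S}(g_i,g_j)=\Sigma_{ij}$ with $\sigma_T^2$ and $\Sigma$ exactly as in the statement.

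The analytic core is to show that for any fixed bounded $g:\mathbb N^l\to\mathbb R$ (and, for the vector claim, for $g(x)=\langle\bm c,x/|x|\rangle$ with arbitrary $\bm c\in\mathbb R^l$), $S_n:=r_n^{-1/2}\sum_{j=1}^{r_n}\bigl(g(X_j)-\E_{p_S}(g)\bigr)\rightarrow_{\mathcal L}\mathcal N\bigl(0,\mathrm{Var}_{p_S}(g)\bigr)$. I would prove this by comparison with a genuinely i.i.d. benchmark $\tilde X_1,\dots,\tilde X_{r_n}$ distributed as $p_S$: writing $\psi(x)=\exp\{it(g(x)-\E_{p_S}g)/\sqrt{r_n}\}$, which satisfies $\|\psi-1\|_\infty=O(|t|/\sqrt{r_n})$ by boundedness of $g$, expand both $\E\bigl(\prod_j\psi(X_j)\bigr)$ and $\bigl(\E_{p_S}\psi\bigr)^{r_n}=\E\bigl(\prod_j\psi(\tilde X_j)\bigr)$ over subsets $A\subseteq\{1,\dots,r_n\}$; using exchangeability of the sample, the size-$m$ contributions are dominated by $(c|t|)^m/m!$ uniformly in $n$, and for each subset the joint-law error $\bigl|\E\bigl(\prod_{j\in A}(\psi-1)(X_j)\bigr)-\prod_{j\in A}\E_{p_S}(\psi-1)\bigr|$ vanishes by the asymptotic-independence estimates already established in the proofs of Theorems \ref{thm:iid}, \ref{thm:asiid} and \ref{thm:assympDn} (in particular $\P(D_n)\to 1$ with the rate $\rho^{\alpha n}r_n^{-2}(1-\P(D_n))\to 0$, Lemmas \ref{lem:KSc} and \ref{lem:aux}, and the Kesten--Stigum convergence of the combinatorial ratios on $D_n$; the exceptional event $D_n^c$ is negligible since all integrands are bounded by $1$). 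Dominated convergence in $m$ then yields $\E(e^{itS_n})-\E\bigl(e^{itS_n(\tilde X)}\bigr)\to 0$, while the i.i.d. benchmark obeys the classical Lindeberg--L\'evy theorem (the Taylor remainder is $o(1/r_n)$ by boundedness, $\E_{p_S}(g-\E_{p_S}g)=0$, $\E_{p_S}((g-\E_{p_S}g)^2)=\mathrm{Var}_{p_S}(g)<\infty$), so $\E(e^{itS_n})\to e^{-t^2\mathrm{Var}_{p_S}(g)/2}$, which is the desired limit.

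Applying this with $g=g_0$ gives $\sqrt{r_n}(T_n-\rho^{-1})\rightarrow_{\mathcal L}\mathcal N(0,\sigma_T^2)$, and since $T_n\to\rho^{-1}$ in probability, the delta method with $h(x)=1/x$, $h'(\rho^{-1})=-\rho^2$, yields $\sqrt{r_n}(T_n^{-1}-\rho)\rightarrow_{\mathcal L}\mathcal N(0,\rho^4\sigma_T^2)$. Applying the statement to $g(x)=\langle\bm c,x/|x|\rangle$ for every $\bm c$ and invoking Cram\'er--Wold gives $\sqrt{r_n}(\bm U_n-\bm b)\rightarrow_{\mathcal L}\mathcal N_l(\bm 0,\Sigma)$ with the covariance identified above. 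Consistency of $T_n$ and $\bm U_n$, needed to license the delta method, follows either from the same CLT or directly from convergence of the first moments and vanishing variance.

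The main obstacle is the step asserting that the per-subset joint-law error in the expansion is small \emph{uniformly enough} to survive summation over all subset sizes $m\le r_n$: Theorem \ref{thm:asiid} is stated only for the linear characteristic-function exponents $\langle\bm t,\bm X\rangle$, whereas $1/|x|$ and $x_i/|x|$ are not linear, so one must either re-run the proof of Theorem \ref{thm:asiid} for these two bounded functions directly, or truncate them and approximate the truncations by trigonometric polynomials in order to invoke Theorem \ref{thm:asiid} as a black box; in either route the bookkeeping of errors at the $r_n^{-1/2}$ scale, together with the implicit coupling of the limits $n\to\infty$ and $r_n\to\infty$ subject to $r_n^2\rho^{-n}\to 0$, is where the care lies. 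Everything else --- the moment computations, the delta method, and the negligibility of $D_n^c$ --- is routine given the earlier results.
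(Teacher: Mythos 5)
Your overall route is the same as the paper's: the paper's proof of Theorem \ref{thm:moments} consists exactly of the two moment identities \eqref{eq:mmrho} and \eqref{eq:mmvect}, an appeal to ``Theorem \ref{thm:asiid} and the Central Limit Theorem'' to treat $T_n$ and $\bm{U_n}$ as averages of iid $p_S$ variables, and the delta method with $f(x)=x^{-1}$ (the paper's final display reads $\rho^{-4}$, a typo; your $(h'(\rho^{-1}))^2=\rho^{4}$ is the correct value, matching the theorem statement). Your second-moment computations identifying $\sigma_T^2$ and $\Sigma$, the boundedness remarks, the delta method, and Cram\'er--Wold are all correct and are exactly what the paper uses or leaves implicit.

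Where you go beyond the paper --- the comparison of $\E\bigl(\prod_j\psi(X_j)\bigr)$ with $\bigl(\E_{p_S}\psi\bigr)^{r_n}$ via a subset expansion --- you are addressing a step the paper does not justify at all, but your domination claim does not hold as written. With $\|\psi-1\|_\infty=O(|t|/\sqrt{r_n})$, the size-$m$ block of the expansion is bounded only by $\binom{r_n}{m}\bigl(c|t|/\sqrt{r_n}\bigr)^m\le (c|t|)^m r_n^{m/2}/m!$, which is not summable uniformly in $n$. The $O(t^2/r_n)$ per-factor estimate that would rescue the sum comes from the centering $\E_{p_S}(g-\E_{p_S}g)=0$, and that cancellation is only available once the expectation factorizes, i.e.\ on the iid benchmark side; on the dependent side $\E\bigl(\prod_{j\in A}(\psi-1)(X_j)\bigr)$ admits no such first-order cancellation without further argument, and the required closeness to the product law would have to be controlled uniformly over subset sizes $m$ up to $r_n$. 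So this bridge needs repair (for instance, a Lindeberg-type one-coordinate-at-a-time swap using total-variation closeness of the joint law of a fixed block of coordinates to $p_S^{\otimes m}$ together with $\P(D_n^c)\to 0$ and boundedness of the integrands, or a triangular-array CLT applied on $D_n$). You correctly flag this as the main obstacle; note that the paper simply asserts the CLT at this point, so the gap you are trying to fill is present in the paper's own proof rather than introduced by your proposal.
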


\section{Example: Respondent-Driven Sampling.}\label{sec:ex}

This section contains a simulation based illustration of the results in this paper.

	To effectively survey segments of the general population that are typically difficult to access through traditional sampling methods, a specialized sampling approach known as respondent-driven sampling (RDS) has been developed\cite{dh}. RDS  has  become  particularly  popular in HIV research as the populations most at risk for HIV (e.g., people who inject drugs, sex workers, and men who have sex with men) are hard to sample using conventional techniques \cite{roch2018generalized,gile2011improved,mccreesh2012evaluation}. Modelling the population as a connected graph, the RDS scheme consists on surveying members of the population and subsequently tasking them with handing out surveys in their network. As the HIV status of an individual depends on the (unknown) HIV status of the individuals in their network, the process can be modeled as a two type branching process, with $\bm{X}_j$ counting the numbers of HIV positive and negative individuals to whom individual $j$ handed out a survey. As a chain-referral method, RDS results are intrinsically dependent on the underlying network structure of the population \cite{sperandei2018assessing}, and estimates based on RDS data are known to be biased,  \cite{gile2011improved,goel2009respondent}, we will simulate network structures according to two different models: the Erdös-Renyi model \cite{erdHos1961strength}, connections between different individuals are generated independently with known probability, resulting in the same number of expected connections per individual and a more homogeneous network; and the Barabasi-Albert model \cite{barabasi1999emergence}, where individuals are added sequentially and each new individual chooses a fixed number connections at random, resulting in a \textit{rich get richer} type of network.
	
	We conduct a numerical study, proceeding in similar fashion than the authors in \cite{sperandei2018assessing} did:
	\begin{itemize}
		\item[$\bullet$] On a population of $N=10^4$ individuals, we simulate two different networks, one according to the Erdös-Renyi model \cite{erdHos1961strength}  with $10$ expected connections per individual and another according to the Barabasi-Albert model \cite{barabasi1999emergence}, with each new individual choosing 5 connections upon addition, also resulting in  $10$ expected connections per individual. These networks remain fixed throughout the study.
		\item[$\bullet$] In each network, we set transmission processes starting with $k=10,100,250$ randomly selected individuals to transmit the condition
		 to their contacts, with probability $0.05$ to each individual in their network. HIV is spread in the population step by step until a target proportion of at $5\%$ or $15\%$ is reached. 
		\item[$\bullet$] Starting from a randomly chosen individual, we start the RDS branching tree with $p_1$ and $p_2$ (the probability distribution of positive and negative HIV statuses) is as follows: each individual is tasked to hand out 1, 2 or 3 surveys with probabilities $0.18$, $0.18$ and $0.64$ independently among their network, so that the probability distributions for each vector depend on each individual's HIV status and are unknown to the researcher.
		\item[$\bullet$] As $\rho=2.46$, after $n=8$ generations surveyed, we choose a sample of $r_n=n^2=64$ surveyed individuals and estimate the proportion of HIV positive individuals in the population using the estimator in \eqref{eq:mmest}.
	\end{itemize}

	In each network, we simulate the transmission and sampling process and calculate the prevalence estimator described in Theorem \ref{thm:moments}, repeating this process 1000 times. A description of these results follows in figure ~\ref{fig3} on page ~\pageref{fig3} and table~\ref{tab2} in page ~\pageref{tab2}.
	
	\begin{figure}
		\begin{centering}
			{\includegraphics[width=0.47\linewidth]{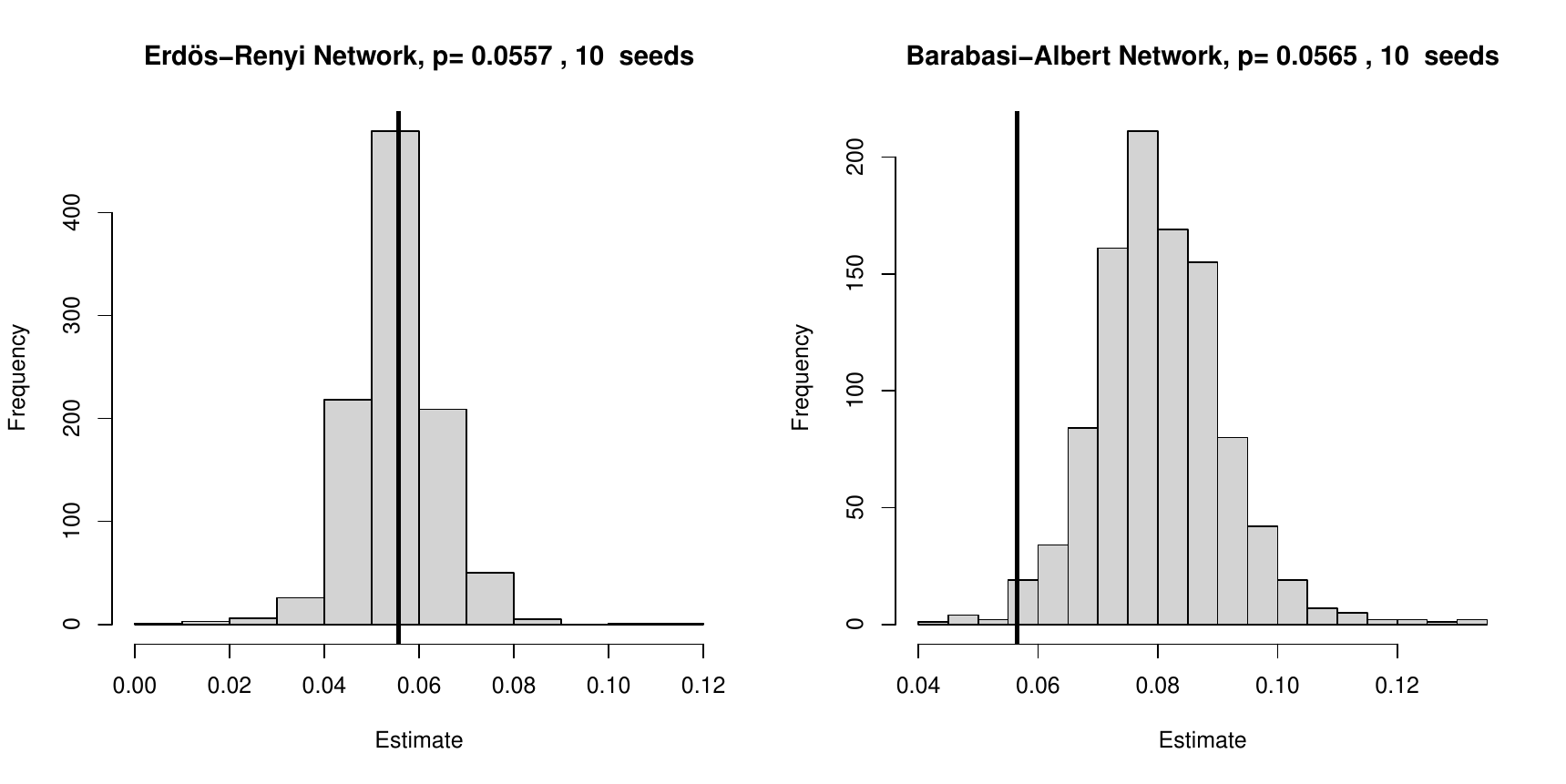}}
			{\includegraphics[width=0.47\linewidth]{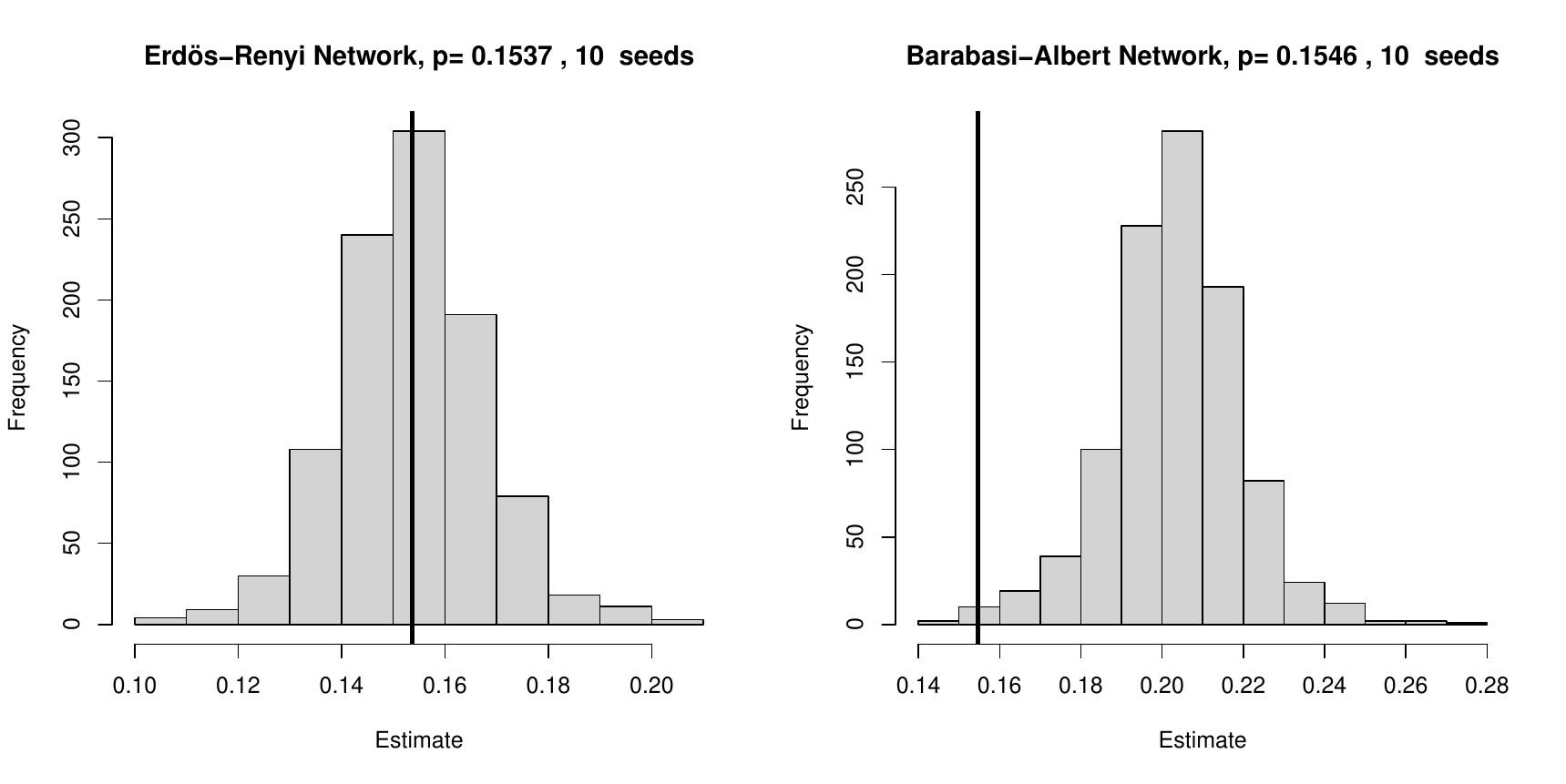}}\\
			{\includegraphics[width=0.47\linewidth]{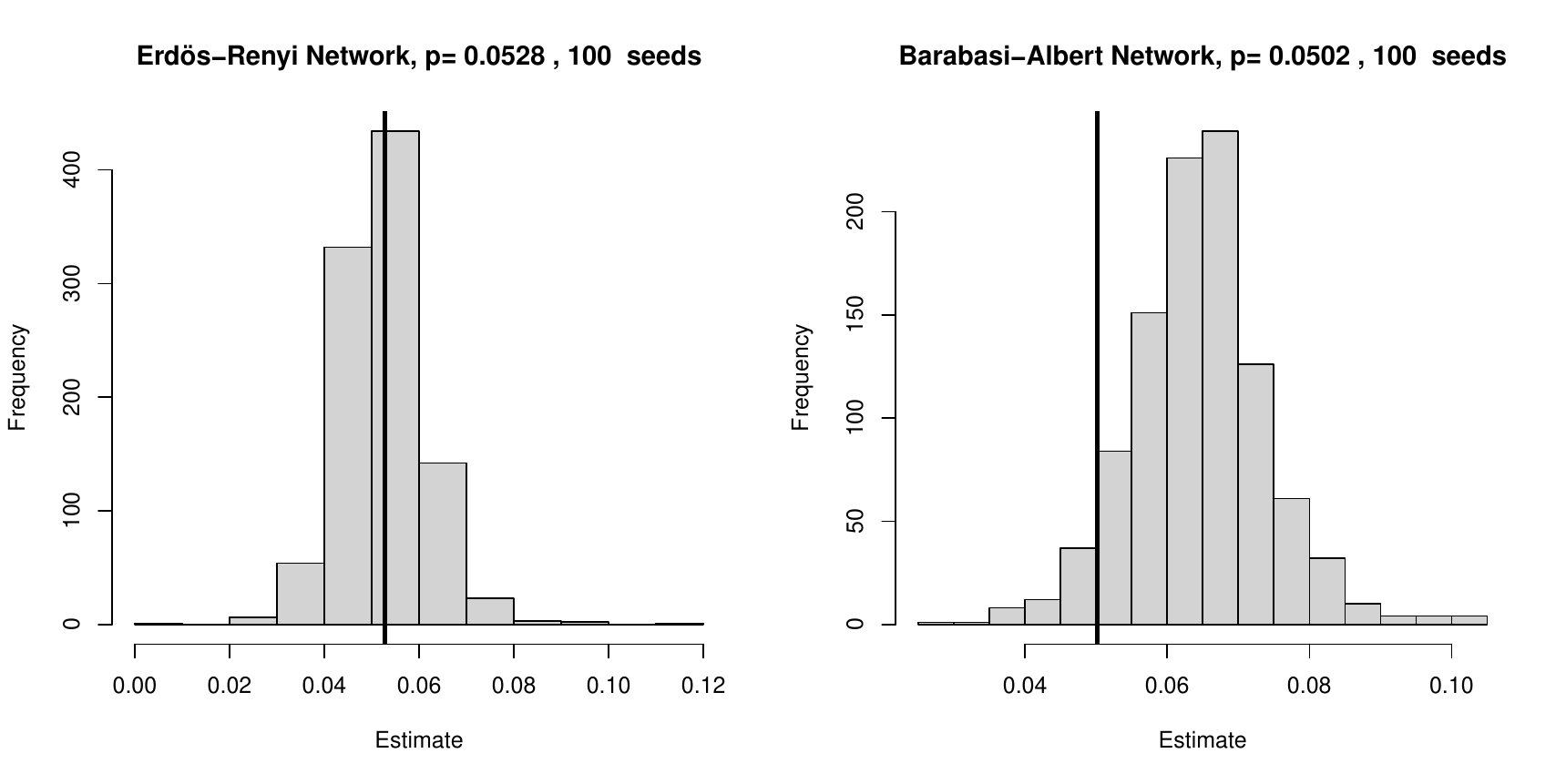}}
			{\includegraphics[width=0.47\linewidth]{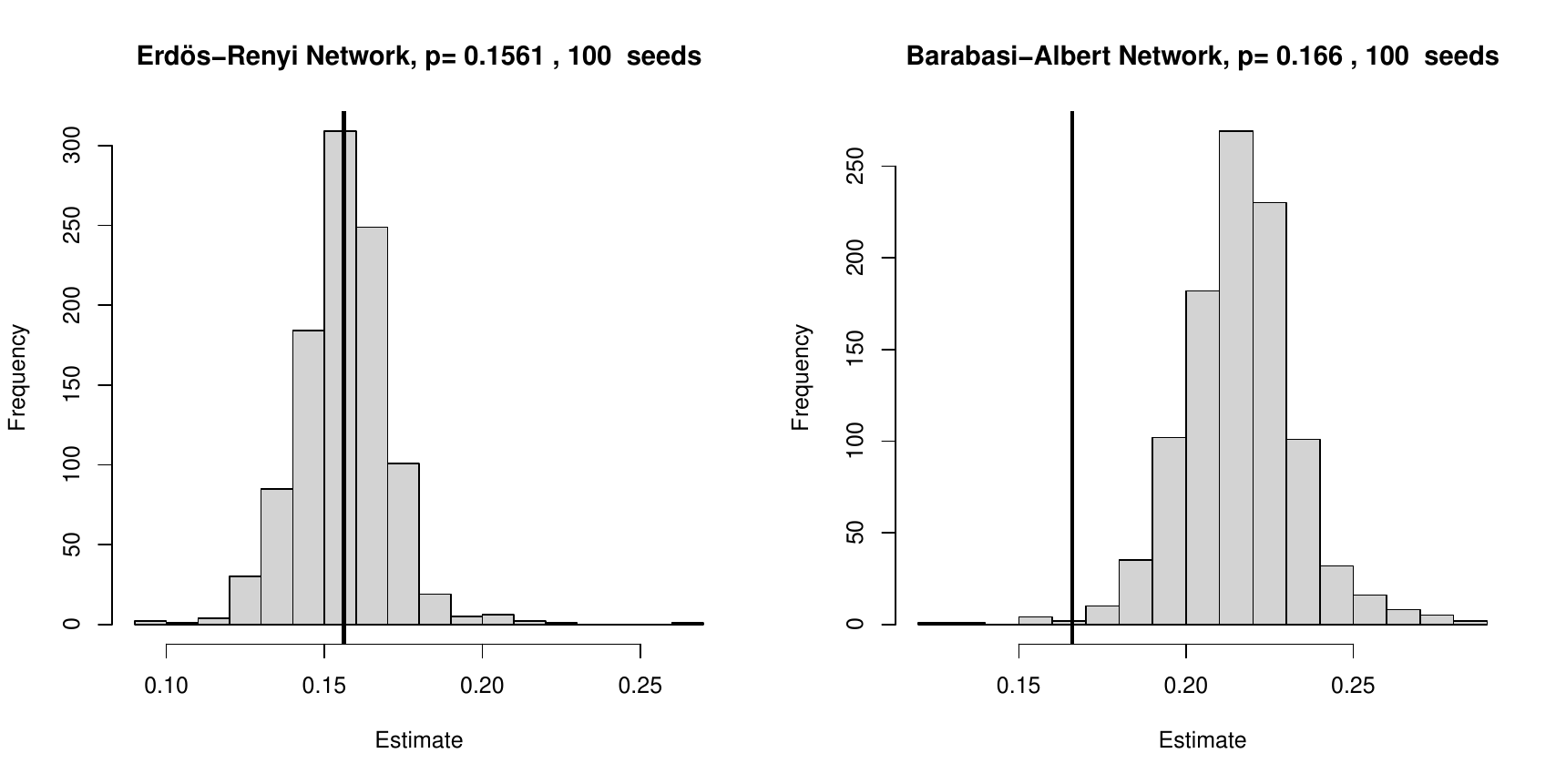}}\\
			{\includegraphics[width=0.47\linewidth]{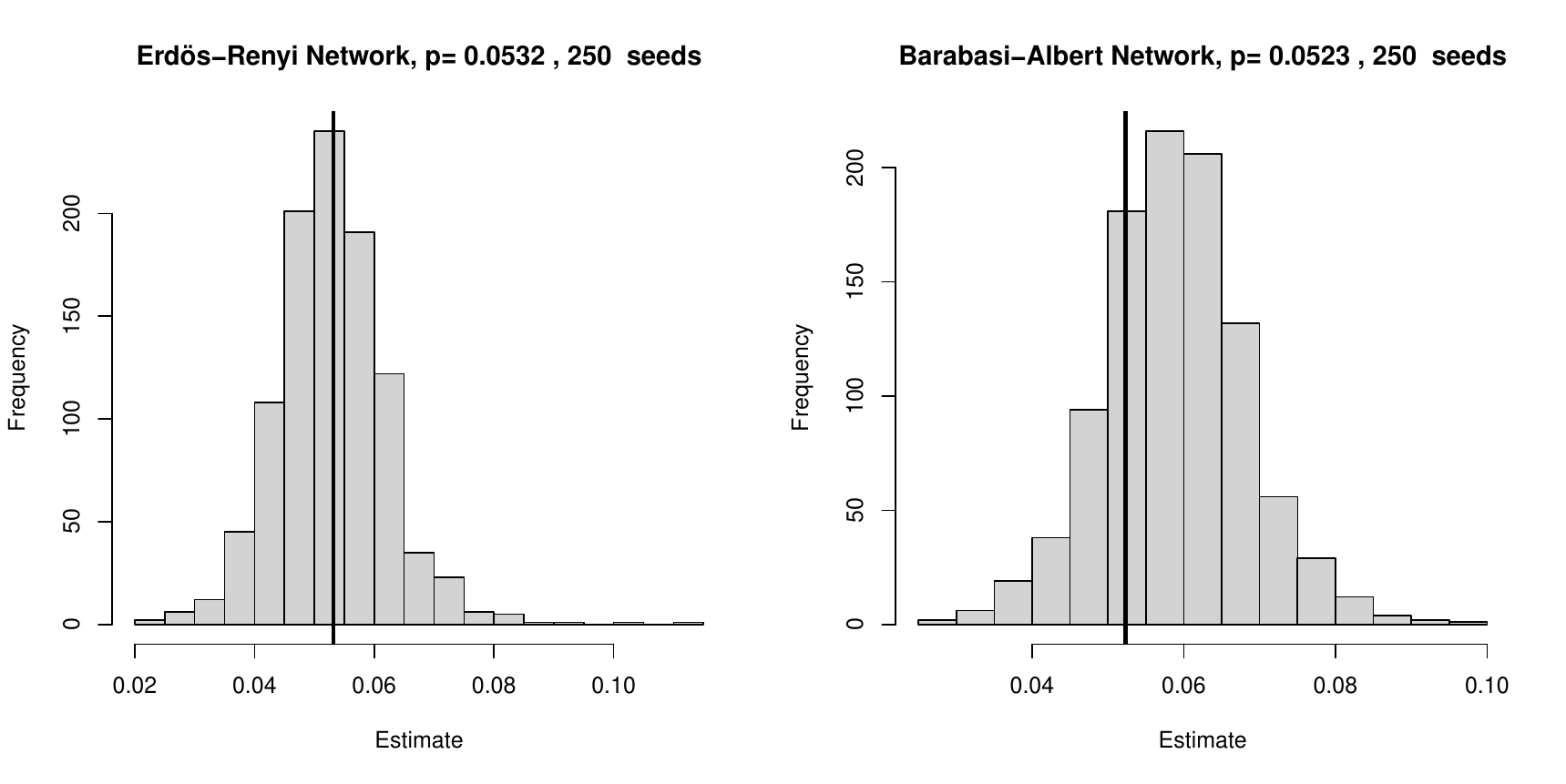}}
			{\includegraphics[width=0.47\linewidth]{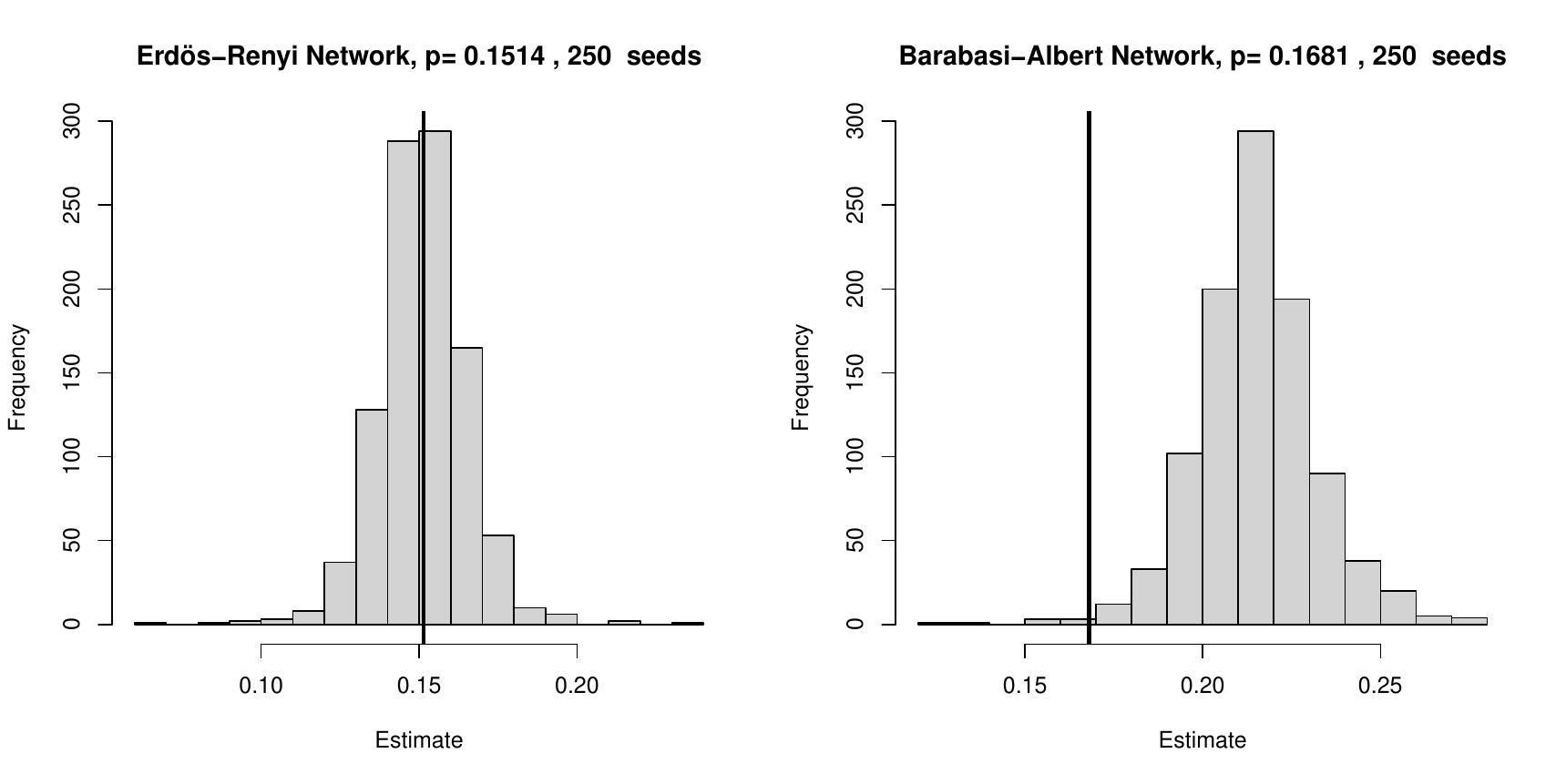}}\\
		\end{centering}
		\caption{Each pair of histograms contain the empirical distribution of estimates for the proportion of HIV status based on 1000 simulations. Number of seeds used are 10 (top row), 100 (middle row) and 250 (bottom row); transmission is spread until $5\%$ (left column) and $15\%$ (right column) of population reaches positive HIV status.}
		\label{fig3}
	\end{figure}

\begin{table}[h]
		\caption{Theoretical and average values (with standard deviations) for largest eigenvalue and normalized corresponding eigenvector.}
	\label{tab2}
	\centering
	\begin{tabular}{lllllll}
		\hline
		\multicolumn{1}{|l|}{Network}            & \multicolumn{6}{l|}{Erdös-Renyi}                                                                                                                                  \\ \hline
		\multicolumn{1}{|l|}{Target prevalence}  & \multicolumn{3}{l|}{$p\sim 5 \%$}                                             & \multicolumn{3}{l|}{$p\sim 15 \%$}                                                \\ \hline
		\multicolumn{1}{|l|}{Number of seeds}    & \multicolumn{1}{l|}{10} & \multicolumn{1}{l|}{100} & \multicolumn{1}{l|}{250} & \multicolumn{1}{l|}{10} & \multicolumn{1}{l|}{100} & \multicolumn{1}{l|}{250}     \\ \hline
		\multicolumn{1}{|l|}{Actual prevalence}  & 0.0557                  & 0.0528                   & 0.0532                   & 0.1537                  & 0.1561                   & \multicolumn{1}{l|}{0.1514}  \\ \cline{1-1}
		\multicolumn{1}{|l|}{Estimator value}    & 0.0552                  & 0.0526                   & 0.053                    & 0.1534                  & 0.1561                   & \multicolumn{1}{l|}{0.1511}  \\ \cline{1-1}
		\multicolumn{1}{|l|}{Bias}               & -0.0005                 & -0.0002                  & -0.0002                  & -0.0003                 & 0                        & \multicolumn{1}{l|}{-0.0003} \\ \cline{1-1}
		\multicolumn{1}{|l|}{Standard deviation} & 0.0095                  & 0.009                    & 0.00917                  & 0.0144                  & 0.0148                   & \multicolumn{1}{l|}{0.01424} \\ \hline
		&                         &                          &                          &                         &                          &                              \\ \hline
		\multicolumn{1}{|l|}{Network}            & \multicolumn{6}{l|}{Barabasi-Albert}                                                                                                                              \\ \hline
		\multicolumn{1}{|l|}{Target prevalence}  & \multicolumn{3}{l|}{$p\sim 5 \%$}                                             & \multicolumn{3}{l|}{$p\sim 15 \%$}                                                \\ \hline
		\multicolumn{1}{|l|}{Number of seeds}    & \multicolumn{1}{l|}{10} & \multicolumn{1}{l|}{100} & \multicolumn{1}{l|}{250} & \multicolumn{1}{l|}{10} & \multicolumn{1}{l|}{100} & \multicolumn{1}{l|}{250}     \\ \hline
		\multicolumn{1}{|l|}{Actual prevalence}  & 0.0565                  & 0.0502                   & 0.0523                   & 0.1546                  & 0.166                    & \multicolumn{1}{l|}{0.1681}  \\ \cline{1-1}
		\multicolumn{1}{|l|}{Estimator value}    & 0.0804                  & 0.0645                   & 0.0587                   & 0.203                   & 0.216                    & \multicolumn{1}{l|}{0.2152}  \\ \cline{1-1}
		\multicolumn{1}{|l|}{Bias}               & 0.0239                  & 0.0143                   & 0.0064                   & 0.04843                 & 0.05                     & \multicolumn{1}{l|}{0.0471}  \\ \cline{1-1}
		\multicolumn{1}{|l|}{Standard deviation} & 0.0109                  & 0.0096                   & 0.00949                  & 0.0162                  & 0.0192                   & \multicolumn{1}{l|}{0.0169}  \\ \hline
	\end{tabular}
\end{table}
	
	The proportion estimates have a normal shaped empirical distribution in all simulated scenarios, in accordance to Theorem \ref{thm:moments}, and they have a negligible bias when obtained from an Erdös-Renyi network. When data comes from a Barabasi-Albert network, however, the estimators in \eqref{eq:mmest} consistently overestimate the true population prevalence, particularly when the seed size is small, without any noticeable change in standard error. A large seed size compared to the target population prevalence resembles transmission at random, and in this case our estimates have a smaller (while still noticeable) bias as can be seen in the lower left panel of figure \ref{fig3} for $K=250$ and $p=5\%$. Conversely, for smaller seed size, transmission strongly depends on the network structure and the method of moments overestimates the true proportion in over $95\%$ of cases (see top row of figure \ref{fig3}, where transmission is initiated with only 10 individuals), which may indicate that the assumption that all HIV positive individuals have the same offspring (survey) distribution is not valid. Prior studies have incorporated network information into the performance of RDS based estimators \cite{mccreesh2012evaluation,wejnert2009empirical,mills2014errors}. Considering such information in our proposed estimates, however, would require extending the types of individuals to pose an offspring probability distribution according not only to HIV status but also to network size and structure. As \cite{sperandei2018assessing} points out, the number of contacts in common between  individuals is hard to assess, and there is little, if any, information about it, so even assuming that network information from different individuals is precise, their overlap is usually hard or impossible to estimate in real-life conditions.
	
	
	\section{Conclusions}\label{sec:disc}
	
	This study represents a significant extension of the work conducted by \cite{mm,maki1989can} on sampling distributions and estimation for family size data to the multi-dimensional branching process framework. Our research delves into various dimensions, yielding outcomes that parallel those of the single-dimensional case. Moreover, we have gained insights into numerous properties and applications that align with the findings of the singular case, contributing to a deeper understanding of sampling distributions and estimation techniques for multi-dimensional branching processes, thereby enriching the existing body of knowledge in this field.

	The limiting result provided by \cite{ksarticle} for supercritical branching processes that almost surely do not go extinct and reasonable assumptions on the offspring distributions involved in the branching process provides a sample size for which the probability of finding no siblings in a random sample grows to one, a condition under which the sampled family sizes is shown to behave as independent and identically distributed data with an asymptotic probability mass function depending on its mean offspring matrix and the probability mass functions of each of the types of individuals in the branching process. Parametric estimation methods based on maximum likelihood and mean offspring matrix characteristics moment estimation methods based on this asymptotic distribution are shown to be consistent.
	
	Assumptions on the offspring behavior are less restricting than in the one dimensional case, in the sense that supercriticality is demanded only from some type of individuals to guarantee the global supercriticality of the Markov Chain. The largest eigenvalue being greater than one is a condition that can be achieved without requiring that all types are supercritical. This allows for study of processes that are not necessarily supercritical by themselves but are a coordinate of a supercritical multitype branching process, as is the case with the first three coordinates of the mentioned example. The allowed sampling size, mentioned in assumption \ref{ass:cond} is bounded by the powers of this largest eigenvalue, again allowing for a larger sample of a multidimensional process than the marginal study of single components of interest would.
	
	The numerical study conducted in this work shows that estimation of population features based on $p_S$ is consistent and asymptotically normal under ideal circumstances (individuals connected at random). The preferential attachment model of the Barabasi-Albert network, however, is a more realistic social network model \cite{jusup2022social,simon1955class} and our estimates show a systematic bias in this case. Utilizing self reported network distribution data has well documented problems \cite{mccreesh2012evaluation, wejnert2009empirical}, and HIV prevalence literature suggests that network distribution may rely not only on HIV status but also on social and demographic features (neighborhood, national origin, sexual orientation) \cite{montealegre2012hiv,montealegre2012prevalence}. The accuracy of our method of moments estimates in the connected at random setting suggests that accomodating the branching process to include offspring distributions for each feature will produce reliable estimates based on Respondent Driven Sampling using real data based on social networks.
	
	Although this work allows for the characterization of $p_S$, estimates for the distribution of each of the types' offspring distribution are not available under the absence of parametric assumptions on them. This is a natural consequence of sampling a single generation, the setting of this work, so future developments include the extension of the framework in this paper to the sampling of multiple generations and/or to sampling schemes that include information on the parent type of a sampled vector of family sizes, although the latter may result in mathematically trivial developments that require hard to obtain information, impractical in an applied setting.
	
	Future research should include the extension or modification of this work to allow for offspring distributions that encompass the zero vector within their support. Banning the case of zero offspring is a sufficient condition for non-extinction, but while our analysis has demonstrated that the exclusion of zero offspring is also adequate to ensure the convergence of the sampling distribution to an i.i.d. mixture, it is pertinent to also study the sampling distribution of real-world branching processes that do not inherently adhere to this condition \cite{davison2021parameter, couronne2014branching}. Elements within a branching process that yield zero offspring would remain unobservable, akin to censored data, requiring the use of methods like expectation-maximization would have to be applied in order to make inference on such distributions. By addressing these complexities, future studies aim to refine our understanding of branching processes and enhance their applicability in diverse real-world contexts.

	\bibliographystyle{tfnlm}
	\bibliography{paper}

	\appendix
	\section{Proof of lemmas and theorems}\label{sec:proof}
	
	Throughout this section, we will appeal to Corollary \ref{cor:rates}, which is an immediate consequence of the  Kesten-Stigum Theorem (Theorem \ref{thm:KST})
	
	\begin{cor}\label{cor:rates} The following asymptotic results hold almost surely, with $W$ the one dimensional random variable defined in Theorem \ref{thm:KST}
		
		\begin{itemize}
			\item[i)] $$|\bm{Z_{n}}|\rho^{-n} \rightarrow W .$$
			\item[ii)] $$Z_{n,i}\rho^{-n} \rightarrow b_iW \mbox{, for } i\in \{1,\ldots , l \}.$$
			\item[iii)] $$\frac{Z_{n,i}}{|\bm{Z_{n}}|}  \rightarrow b_i \mbox{, for } i\in \{1,\ldots , l \}.$$
			\item[iv)] For  $k \in \mathbb{N}$ $$\frac{|\bm{Z_{n-1}}|^k}{|\bm{Z_{n}}|^k}\rightarrow \rho^{-k} \mbox{ and for } i\in \{1,\ldots , l \},  \frac{Z_{n-1,i}^k}{Z_{n,i}^k} \rightarrow \rho^{-k}.$$
		\end{itemize}
		
	\end{cor}
	
	We also establish two calculus results regarding the rate of growth of $r_n$:
	
	\begin{lem}\label{lem:KSc} Under assumption \ref{ass:cond}, $r^2_n|\bm{Z_{n-1}}|^{-1}\rightarrow 0$ almost surely as $n \to \infty$
	\end{lem}
	\begin{proof}
		$$\frac{r^2_n}{|\bm{Z_{n-1}}|}=\frac{r^2_n}{\rho^n}\frac{\rho^{n-1}}{|\bm{Z_{n-1}}|}\rho $$
		
		By Corollary \ref{cor:rates} of Theorem \ref{thm:KST}, $\frac{\rho^{n-1}}{|\bm{Z_{n-1}}|}$ converges almost surely to $W^{-1}$. As the sequence is a martingale, Doob's martingale convergence  theorem \cite{doob1961notes} and Jensen's inequality imply that $W^{-1}$ is bounded almost surely, and since by assumption \ref{ass:cond} i) $\frac{r^2_n}{\rho^n}\rightarrow 0$, the result follows.
	\end{proof}
	
	\begin{lem}\label{lem:aux}
		Let $K$ as in assumption \ref{ass:cond} iii). For $\alpha \in (0, -\log_\rho K)$, $\rho^{\alpha n}\E(|\bm{Z_{n}}|^{-1})\rightarrow 0.$
		
	\end{lem}

	\begin{proof} As the arithmetic mean of positive numbers is always greater than their geometric mean
		
		\begin{align*}
			\E(|\bm{Z_{n}}|^{-1}|\bm{Z_{n-1}})&=\E\Biggl[\Bigg(\sum_{i=1}^l\sum_{j=1}^{Z_{n-1,i}}|Y^{(i)}_j|\Bigg)^{-1}\Bigg|\bm{Z_{n-1}}\Biggr] \\
			&\leq|\bm{Z_{n-1}}|^{-1}  \E\Biggl[\Bigg(\prod_{i=1}^l\prod_{j=1}^{Z_{n-1,i}}|Y^{(i)}_j|\Bigg)^{-1/|\bm{Z_{n-1}}|}\Bigg|\bm{Z_{n-1}}\Biggr].
		\end{align*}
		
		From Jensen's conditional inequality, independence and assumption \ref{ass:cond} iii), one gets
		
		\begin{align*}
			\E(|\bm{Z_{n}}|^{-1}|\bm{Z_{n-1}})&\leq|\bm{Z_{n-1}}|^{-1}  \E\Biggl[\prod_{i=1}^l\prod_{j=1}^{Z_{n-1,i}}|Y^{(i)}_j|\Bigg|\bm{Z_{n-1}}\Biggr]^{-1/|\bm{Z_{n-1}}|}\\
			&\leq |\bm{Z_{n-1}}|^{-1}\prod_{i=1}^l\prod_{j=1}^{Z_{n-1,i}} \E(|Y^{(i)}_j|^{-1})^\frac{1}{|\bm{Z_{n-1}}|}\\
			&\leq |\bm{Z_{n-1}}|^{-1}\prod_{i=1}^l\prod_{j=1}^{Z_{n-1,i}} K^\frac{1}{|\bm{Z_{n-1}}|}\\
			&=|\bm{Z_{n-1}}|^{-1} K.\\
		\end{align*}
		
		Taking expected values iteratively, one gets the result as

		$$	0\leq 	\rho^{\alpha n}\E(|\bm{Z_{n}}|^{-1}) \leq \rho^{\alpha (n-1)}\E(|\bm{Z_{n-1}}|^{-1})\rho^\alpha K \leq\cdots \leq \E(|Z_0|^{-1})[\rho^\alpha K]^n \to 0,$$
		and the last term goes to zero since $\alpha <-\log_\rho K \implies \rho^\alpha K < 1$.
		
	\end{proof}
	
	\subsection{Proof of Lemma \ref{lem:sam2}} \begin{proof}
		
		We consider the following event throughout this analysis of the sampling distribution:
		
		$$A=\{\mbox{ sampled individuals $1$ and $2$ have parents of the same type}\}$$
		
		We split this analysis into three scenarios:

\indent $\bullet$\textbf{Case 1}:$\bm{u}=\bm{v}=\bm{e_k}\in \mathbb{R}^l$.
			This is the case where two families consisting of one individual of type $k$ are selected, for some $k \in \{1,\ldots , l\}$. We have

			$$	\P(X_1=\bm{u}, X_2=\bm{v}|\bm{Z_{n-1}})=\P(X_1= X_2=\bm{e_k}, A|\bm{Z_{n-1}})+\P(X_1= X_2=\bm{e_k}, A^c|\bm{Z_{n-1}})$$
			We now analyze this two terms separately. On $A$, using Bayes' formula:
			
			\begin{align*}
				\P(X_1= X_2=\bm{e_k}, A|\bm{Z_{n-1}})=& \sum_{i=1}^l \P(\mbox{ two $\bm{e_k}$ families are selected with parents of type $i$})\\
				=&\sum_{i=1}^l \P(\mbox{ two $\bm{e_k}$ families are selected}|\mbox{parents of type $i$})\times \cdots \\ &\cdots \P(\mbox{selected families w/parents of type $i$})\\
				=&\sum_{i=1}^l \P(\mbox{ two $\bm{e_k}$ families are selected}|\mbox{parents of type $i$})\times \cdots \\ &\cdots \E\Biggl(\frac{S_{n-1,i}(S_{n-1,i}-1)}{|\bm{Z_{n}}|(|\bm{Z_{n}}|-1)}\biggr|\bm{Z_{n-1}}\Biggr).\\
			\end{align*}

			To study $(*)=\P(\mbox{ two $\bm{e_k}$ families are selected}|\mbox{parents of type $i$})$, define the events: $B_{s,t}=\{\mbox{families $s$ and $t$ are $\bm{e_k}$}\}, C_{s,t}=\{\mbox{families $s$ and $t$ are selected}\}.$ With these,
			
			\begin{align*}
				(*)&=\sum_{t=1}^{Z_{n-1,i}} \sum_{s>t}\P(C_{s,t},B_{s,t}|\mbox{parents of type $i$})\\
				&=\sum_{t=1}^{Z_{n-1,i}} \sum_{s>t}\P(C_{s,t}|B_{s,t},\mbox{parents of type $i$})\P(B_{s,t}|\mbox{parents of type $i$})\\
				&=\sum_{t=1}^{Z_{n-1,i}} \sum_{s>t}\E\Biggl(\binom{2+\sum_{j=3}^{Z_{n-1,i}}|Y_{n-1,j}^{(i)}|}{2}^{-1}\bigg|\bm{Z_{n-1}}\Biggr)p_i(\bm{e_k})^2\\
				&=\binom{Z_{n-1,i}}{2}\E\Biggl(\binom{2+\sum_{j=3}^{Z_{n-1,i}}|Y_{n-1,j}^{(i)}|}{2}^{-1}\bigg|\bm{Z_{n-1}}\Biggr)p_i(\bm{e_k})^2.\\
			\end{align*}
			So the term $\P(X_1= X_2=\bm{e_k}, A|\bm{Z_{n-1}})$ is equal to $$\sum_{i=1}^l\E\Biggl( \frac{S_{n-1,i}(S_{n-1,i}-1)}{|\bm{Z_{n}}|(|\bm{Z_{n}}|-1)}\binom{Z_{n-1,i}}{2}\binom{2+\sum_{j=3}^{Z_{n-1,i}}|Y_{n-1,j}^{(i)}|}{2}^{-1}\bigg|\bm{Z_{n-1}}\Biggr)p_i(\bm{e_k})^2.$$
			
			Analogously for $\P(X_1= X_2=\bm{e_k}, A^c|\bm{Z_{n-1}})$, we have,
			
			\begin{align*}
				\P(X_1= X_2=\bm{e_k}, A^c|\bm{Z_{n-1}})&=2\sum_{i=1}^l\sum_{j>i}\P(\mbox{two $\bm{e_k}$ families selected, parents of type $i$,$j$})\\
				&=2\sum_{i=1}^l\sum_{j>i}\P(\mbox{$\bm{e_k}$ family selected}|\mbox{parent type $i$})\times \cdots \\ &\cdots \frac{S_{n-1,i}}{|\bm{Z_{n}}|}\P(\mbox{$\bm{e_k}$ family selected}|\mbox{parent type $j$})\frac{S_{n-1,j}}{|\bm{Z_{n}}|-1}.\\
			\end{align*}
			
			For arbitrary type $i$ one has
			
	$$\P(\mbox{$\bm{e_k}$ family selected}|\mbox{parent type $i$})=\sum_{t=1}^{Z_{n-1,i}}\P(\mbox{family $t$ is selected and is $\bm{e_k}$}|\mbox{parent type $i$})$$
	
	Where the probability of a specific family having a single individual of type $k$ and being sampled given the type of parents can be further expressed as a product of conditional probabilities $\P(\mbox{family $t$ selected}|\mbox{family $t$ is $\bm{e_k}$, parent type $i$})\times \P(\mbox{family $t$ is $\bm{e_k}$}|\mbox{parent type $i$})$, so the term $\P(X_1=\bm{u}, X_2=\bm{v}, A|\bm{Z_{n-1}})$ is equal to 
			
			$$ 2\sum_{i=1}^l\sum_{j>i} \frac{Z_{n-1,i}Z_{n-1,j}}{|\bm{Z_{n}}|(|\bm{Z_{n}}|-1)} p_i(\bm{e_k})p_j(\bm{e_k})\E\Biggl(\frac{S_{n-1,i}}{1+\sum_{w=2}^{Z_{n-1,i}}|Y_{n-1,w}^{(i)}|}\frac{S_{n-1,j}}{1+\sum_{w=2}^{Z_{n-1,j}}|Y_{n-1,w}^{(j)}|}\bigg|\bm{Z_{n-1}} \Biggr),$$ which leads to
			
			\begin{multline}\label{eq:a1}
				\P(X_1=\bm{e_k},X_2=\bm{e_k}|\bm{Z_{n-1}})= \sum_{i=1}^l\frac{S_{n-1,i}}{|\bm{Z_{n}}|}p_i(\bm{e_k}) \times \\ \Biggl[ \frac{S_{n-1,i}-1}{|\bm{Z_{n}}|-1}\binom{Z_{n-1,i}}{2}\E\biggl( \binom{2+\sum_{j=3}^{Z_{n-1,i}}|Y_{n-1,j}^{(i)}|}{2}^{-1}\bigg|\bm{Z_{n-1}}\biggr)p_i(\bm{e_k}) + \\ \sum_{j>i}\frac{2}{|\bm{Z_{n}}|-1}\E\Biggl(\frac{Z_{n-1,i}Z_{n-1,j}}{1+\sum_{w=2}^{Z_{n-1,i}}|Y_{n-1,w}^{(i)}|}\frac{S_{n-1,j}}{1+\sum_{w=2}^{Z_{n-1,j}}|Y_{n-1,w}^{(j)}|}\bigg|\bm{Z_{n-1}} \Biggr)p_j(\bm{e_k}) \Biggr].
			\end{multline}

\indent$\bullet$\textbf{Case 2}:$\bm{u}\neq \bm{v}$.
			
			As $\bm{u}\neq \bm{v}$ does not allow sampled families to be the same, a very similar analysis as the previous case can be carried out, splitting the probability into same type parents and different type parents to get
			
			\begin{multline}\label{eq:a2}
				\P(X_1=\bm{u},X_2=\bm{v}|\bm{Z_{n-1}})= \frac{|\bm{u}||\bm{v}|}{|\bm{Z_{n}}|(|\bm{Z_{n}}|-1)}\sum_{i=1}^lp_i(\bm{u}) \Biggl[ \\ \binom{Z_{n-1,i}}{2}\E\biggl( S_{n-1,i}(S_{n-1,i}-1)\binom{|\bm{u}|+|\bm{v}|+\sum_{j=3}^{Z_{n-1,i}}|Y_{n-1,j}^{(i)}|}{2}^{-1}\bigg|\bm{Z_{n-1}}\biggr)p_i(\bm{v})\\ + \sum_{j>i}2Z_{n-1,i}Z_{n-1,j}\E\biggl(\frac{S_{n-1,i}}{|\bm{u}|+\sum_{w=2}^{Z_{n-1,i}}|Y_{n-1,w}^{(i)}|}\frac{S_{n-1,j}}{|\bm{v}|+\sum_{w=2}^{Z_{n-1,j}}|Y_{n-1,w}^{(j)}|} \bigg|\bm{Z_{n-1}}\biggr)p_j(\bm{v}) \Biggr].
			\end{multline}

\indent$\bullet$\textbf{Case 3}$\bm{u}=\bm{v} \neq \bm{e_k}\in \mathbb{R}^l$.
			
			We must take into account the possibility of both sampled individuals belonging to the same family. As this only happens in $A$, the analysis in $A^c$ is identical to the previous cases and yields that the sampling probability $
			\P(X_1=X_2=\bm{u}, A^c|\bm{Z_{n-1}})	= 2\sum_{i=1}^l\sum_{j>i}p_i(\bm{u})p_j(\bm{u}) \E\Biggl(\frac{Z_{n-1,i}S_{n-1,i}Z_{n-1,j}S_{n-1,j}}{|\bm{Z_{n}}|(|\bm{Z_{n}}|-1)}\Biggl[\frac{|\bm{u}|}{|\bm{u}|+\sum_{w=2}^{Z_{n-1,i}}|Y_{n-1,w}^{(i)}|}\Biggr]^2  \bigg|\bm{Z_{n-1}}\Biggr)$.
			
			On $A$, we define the set $F=\{ \mbox{individuals $1$ and $2$ come from different families.}\}$. On $F$, the analysis can be performed as in the previous two cases and leads to $\P(X_1=X_2= \bm{u}, A,F|\bm{Z_{n-1}}) =\sum_{i=1}^lp_i(\bm{u})^2|\bm{u}|^2 \E\Biggl( \frac{S_{n-1,i}(S_{n-1,i}-1)}{|\bm{Z_{n}}|(|\bm{Z_{n}}|-1)}\binom{Z_{n-1,i}}{2}\binom{2|\bm{u}|+\sum_{j=3}^{Z_{n-1,i}}|Y_{n-1,j}^{(i)}|}{2}^{-1} \bigg|\bm{Z_{n-1}}\Biggr)$.
			
			We use total probability on $\P(X_1=\bm{u}=X_2, A,F^c|\bm{Z_{n-1}})$ conditioning on the following events: $B_t=\{\mbox{family $t$ is selected twice} \},C_t=\{\mbox{family $t$ is } \bm{u} \}.$ Then, $\P(X_1=X_2= \bm{u}, A,F^c|\bm{Z_{n-1}})$ equals
			
			\begin{align*}
				&\sum_{i=1}^l \P(X_1=X_2=\bm{u}, A,F^c|\mbox{parent type i},\bm{Z_{n-1}})\P(\mbox{parent type i}|\bm{Z_{n-1}})\nonumber\\
				&=\sum_{i=1}^l\sum_{t=1}^{Z_{n-1,i}} \P(B_t,C_t|\mbox{parent type i},\bm{Z_{n-1}})\frac{S_{n-1,i}}{|\bm{Z_{n}}|}\nonumber\\
				&=\sum_{i=1}^l\sum_{t=1}^{Z_{n-1,i}} \P(B_t|C_t,\mbox{parent type i},\bm{Z_{n-1}})\P(C_t|\mbox{parent type i},\bm{Z_{n-1}})\frac{S_{n-1,i}}{|\bm{Z_{n}}|}\nonumber\\
				&=\sum_{i=1}^l\sum_{t=1}^{Z_{n-1,i}}\E\Biggl(\binom{|\bm{u}|}{2}\binom{|\bm{u}|+\sum_{w=2}^{Z_{n-1,i}}|Y_{n-1,w}^{(i)}|}{2}^{-1}\bigg|\bm{Z_{n-1}} \Biggr)p_i(\bm{u})\frac{S_{n-1,i}}{|\bm{Z_{n}}|}\nonumber\\
				&=\sum_{i=1}^l \E\Biggl(\binom{|\bm{u}|}{2}\binom{|\bm{u}|+\sum_{w=2}^{Z_{n-1,i}}|Y_{n-1,w}^{(i)}|}{2}^{-1}\bigg|\bm{Z_{n-1}} \Biggr)p_i(\bm{u})\frac{Z_{n-1,i}S_{n-1,i}}{|\bm{Z_{n}}|}.
			\end{align*}
		
		We therefore have
		\begin{multline}\label{eq:a3}
			P(X_1=X_2=\bm{u}|\bm{Z_{n-1}})=\sum_{i=1}^lp_i(\bm{u})|\bm{u}|S_{n-1,i}|\bm{Z_{n-1}}|^{-1}\Biggl[\\
			p_i(\bm{u})|\bm{u}|\frac{S_{n-1,i}-1}{|\bm{Z_{n}}|-1}\binom{Z_{n-1,i}}{2} \E\Biggl( \binom{2|\bm{u}|+\sum_{j=3}^{Z_{n-1,i}}|Y_{n-1,j}^{(i)}|}{2}^{-1} \bigg|\bm{Z_{n-1}}\Biggr)
			+\\ \sum_{j>i}p_j(\bm{u})\frac{Z_{n-1,i}Z_{n-1,j}S_{n-1,j}}{(|\bm{Z_{n}}|-1)} \E\Biggl(\frac{|\bm{u}|}{(|\bm{u}|+\sum_{w=2}^{Z_{n-1,i}}|Y_{n-1,w}^{(i)}|)^2}  \bigg|\bm{Z_{n-1}}\Biggr)+\\
			Z_{n-1,i}\frac{|\bm{u}|-1}{2}\E\Biggl(\binom{|\bm{u}|+\sum_{w=2}^{Z_{n-1,i}}|Y_{n-1,w}^{(i)}|}{2}^{-1}\bigg|\bm{Z_{n-1}} \Biggr)\Biggr]\\
		\end{multline}
		
		The three cases presented in \eqref{eq:a1}, \eqref{eq:a2} and \eqref{eq:a3} encompass all possibilities. \end{proof}
	
	\subsection{Proof of Theorem \ref{thm:assympDn}}
	
	We recall the Vandermonde convolution identity \cite{vdm}:
	
	\begin{lem}\label{lem:vdm}
		Vandermonde identity: For integers $a_1, \ldots, a_p, n$,
		
		$$\binom{\sum_{k=1}^p a_k}{n}=\sum_{|b|=n}\prod_{k=1}^p \binom{a_k}{b_k}.$$
		
	\end{lem}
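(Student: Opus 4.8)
The plan is to prove the multivariate Vandermonde convolution by a generating-function argument: I interpret both sides as coefficients of $x^n$ in two manifestly equal polynomials. The starting point is the trivial factorization
$$(1+x)^{A}=\prod_{k=1}^p (1+x)^{a_k}, \qquad A:=\sum_{k=1}^p a_k,$$
which holds as an identity in the polynomial ring in one variable $x$ for all nonnegative integers $a_1,\dots,a_p$. Since two polynomials that are equal have identical coefficients, it suffices to read off the coefficient of $x^n$ on each side.

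First I would extract the coefficient of $x^n$ from the single-exponent form $(1+x)^A$. By the binomial theorem this coefficient is exactly $\binom{A}{n}=\binom{\sum_{k=1}^p a_k}{n}$, the left-hand side of the claimed identity. Next I would expand the factored form. Applying the binomial theorem to each factor gives $(1+x)^{a_k}=\sum_{b_k\ge 0}\binom{a_k}{b_k}x^{b_k}$, and multiplying the $p$ series together and collecting terms by total degree shows that the coefficient of $x^n$ in $\prod_{k=1}^p (1+x)^{a_k}$ equals $\sum_{b_1+\cdots+b_p=n}\prod_{k=1}^p\binom{a_k}{b_k}$. This is precisely $\sum_{|b|=n}\prod_{k=1}^p\binom{a_k}{b_k}$, the right-hand side. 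Equating the two coefficient expressions closes the argument.

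The argument is entirely elementary and I do not anticipate a genuine obstacle; the one point requiring care is the bookkeeping of the summation range. The product expansion naturally produces only tuples with $0\le b_k\le a_k$, whereas the statement sums over all nonnegative tuples with $|b|=n$. These agree because any tuple with some $b_k>a_k$ contributes a factor $\binom{a_k}{b_k}=0$ under the standard convention, so the extra terms vanish and the unrestricted sum over $|b|=n$ coincides with the restricted one. As a conceptual cross-check, the identity also admits a double-counting proof: the left-hand side counts the $n$-element subsets of a set of size $A$ partitioned into blocks of sizes $a_1,\dots,a_p$, and classifying each such subset by the vector $(b_1,\dots,b_p)$ recording how many of its elements lie in each block reproduces the right-hand side. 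A third route, induction on $p$ with the classical two-term Vandermonde identity as the base case, is likewise available should a self-contained combinatorial derivation be preferred.
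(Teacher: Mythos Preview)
Your proof is correct: the generating-function argument via $(1+x)^{\sum a_k}=\prod_k(1+x)^{a_k}$ is the standard proof, and your remark about the summation range (tuples with some $b_k>a_k$ contribute zero) handles the only bookkeeping subtlety. The paper, however, does not prove this lemma at all; it merely \emph{recalls} the Vandermonde identity as a known fact before using it in the proof of Theorem~\ref{thm:assympDn}. So there is nothing to compare---you have supplied a proof where the paper simply cites the result.
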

	
	\begin{proof}
		We begin by observing that the probability of selecting $r_n$ different families when all $p_i$'s concentrate at the same vector $\bm{v}$ is a decreasing function of $|\bm{v}|$. Indeed, denote this probability by $P_v(D_n)$. As the function $f(x)=\frac{x^n-ax}{x^n-a}$ is decreasing for $x\geq \Biggl(\frac{r_n-1}{n}\Biggr)^{\frac{1}{n-1}}$, we have that $\Biggl(\frac{r_n-1}{n}\Biggr)^{\frac{1}{n-1}}\leq|\bm{u}|\leq |\bm{v}|$ it holds that
		
		\begin{align*}
			P_u(D_n)&=\E\Biggl(\biggr[\sum_{s \in \mathbb{N}^l: |s|=r_n}\prod_{i=1}^{l} \binom{Z_{n-1,i}}{s_i}|\bm{u}|^{s_i}\biggr]\binom{|\bm{Z_{n}}|}{r_n}^{-1}\Biggr)\\
			&=|\bm{u}|^{r_n}\E\Biggl(\biggr[\sum_{s \in \mathbb{N}^l: |s|=r_n}\prod_{i=1}^{l} \binom{u_i|\bm{u}|^{n-2}}{s_i}\biggr]\binom{|\bm{u}|^n}{r_n}^{-1}\Biggr)\\
			&=|\bm{u}|^{r_n}\binom{|\bm{u}|^{n-1}}{r_n}\binom{|\bm{u}|^n}{r_n}^{-1}\\
			&=\prod_{a=0}^{r_n-1}\frac{|\bm{u}|^n-a|\bm{u}|}{|\bm{u}|^n-a}\\
			&\geq \prod_{a=0}^{r_n-1}\frac{|\bm{v}|^n-a|\bm{v}|}{|\bm{v}|^n-a}= \cdots = P_v(D_n).
		\end{align*}
	We consider initially the case of offspring distributions of bounded support in $\mathbb{N}^l$ for all $l$ types. Define $\Omega_i = \{\bm{w}: p_i(\bm{w})>0 \}$ the support of distribution $p_i$; $\Omega = \cup_{i=1}^l \Omega_i$  and denote by $ \bm{u}\in\arg\max_{\bm{w} \in \Omega} |\bm{w}|; \bm{v}\in \arg\min_{\bm{w} \in \Omega} |\bm{w}|$ the composition of the smallest and largest possible offspring of any type (these vectors need not be unique. However, only their sizes are relevant in the following argument). The argument made at the very beginning guarantees that 
		
		$$P_u(D_n|\bm{Z_{n-1}})\leq \P(D_n|\bm{Z_{n-1}})\leq P_v(D_n|\bm{Z_{n-1}})\leq 1.$$
		
		Note that when the measure is concentrated at the largest possible family size $\bm{u}$
		\begin{align*}
			P_u(D_n|\bm{Z_{n-1}})&=|\bm{u}|^{r_n}\binom{|\bm{Z_{n-1}}|}{r_n}\binom{|\bm{u}||\bm{Z_{n-1}}|}{r_n}^{-1}\\
			&=|\bm{u}|^{r_n}\prod_{a=0}^{r_n-1}\frac{|\bm{Z_{n-1}}|-a}{|\bm{u}||\bm{Z_{n-1}}|-a}\\
			&\geq |\bm{u}|^{r_n}\Biggl( \frac{|\bm{Z_{n-1}}|-r_n-1}{|\bm{u}||\bm{Z_{n-1}}|} \Biggr)^{r_n}\\
			&=\Biggl(1-\frac{r_n-1}{|\bm{Z_{n-1}}|}\Biggr)^{r_n}\\
			&\geq 1-\frac{r_n(r_n-1)}{|\bm{Z_{n-1}}|}\to 1 \text{ as } n \to \infty \\
		\end{align*}
		
		Where the almost sure convergence is due to Lemma \ref{lem:KSc}. By the dominated convergence theorem, $\P(D_n)=\E(\P(D_n|\bm{Z_{n-1}}))\geq \E(P_u(D_n|\bm{Z_{n-1}}))\rightarrow 1$ and the desired result follows for offspring distributions of bounded support.
		
		In the case of offspring distributions of possibly unbounded support, consider the event
		$G_n=\{|Y^{(i)}_{n-1,j}|\leq |\bm{Z_{n-1}}|; 1\leq i \leq l, 1\leq j\leq Z_{n-1,i} \}$ where all individuals produce less offspring than the total population size in their previous generation. From the Markov inequality, one gets
		
		\begin{align*}
			\P(G_n|\bm{Z_{n-1}})&=\prod_{i=1}^l\prod_{j=1}^{Z_{n-1,i}}\P(|Y^{(i)}_{n-1,j}|\leq |\bm{Z_{n-1}}||\bm{Z_{n-1}})\\
			&=\prod_{i=1}^l\P(|Y^{(i){n-1,1}}|\leq |\bm{Z_{n-1}}||\bm{Z_{n-1}})^{Z_{n-1,i}}\\
			&\geq \prod_{i=1}^l\Biggl( 1-\frac{\E(|Y^{(i){n-1,1}}|^2)}{|\bm{Z_{n-1}}|^2} \Biggr)^{Z_{n-1,i}}\\
			&\geq \Biggl( 1-\frac{C}{|\bm{Z_{n-1}}|^2} \Biggr)^{|\bm{Z_{n-1}}|}\\
		\end{align*}
		
		Where the last bound is due to assumption \ref{ass:cond} ii). Therefore,
		
		$$\P(G_n)\geq \E\Biggl(\biggl( 1-\frac{C}{|\bm{Z_{n-1}}|^2} \biggr)^{|\bm{Z_{n-1}}|} \Biggr)\geq \E\Biggl(1-\frac{C}{|\bm{Z_{n-1}}|}\Biggr).$$
		
		As in $G_n$ the maximum family size possible is $|\bm{Z_{n-1}}|$, the argument made in the case of compact supports yields that $\P(D_n|G_n)\geq \E\Bigl(1-\frac{r_n(r_n-1)}{|\bm{Z_{n-1}}|}\Bigr)$ and it follows that 
		
		\begin{align*}
		\P(D_n)&= \P(D_n|G_n)\P(G_n)-\P(D_n|G_n^c)\P(G_n^c)\\
			&\geq \P(D_n|G_n)\P(G_n)\\
			&\geq  \E\Biggl(1-\frac{r_n(r_n-1)}{|\bm{Z_{n-1}}|}\Biggr) -C\E(|\bm{Z_{n-1}}|^{-1})\rightarrow 0,\\
		\end{align*}
		where we appeal again to Lemma \ref{lem:KSc} to argue that the expected value of  $\frac{r_n(r_n-1)}{|\bm{Z_{n-1}}|}$ vanishes, which proves that $\P(D_n)\rightarrow 1$ as desired.
		
		Regarding the speed of convergence, let $\alpha$ as in the statement and note that
		
		\begin{align*}
			\rho^{\alpha n}r_n^{-2}(1-\P(D_n))&\leq\rho^{\alpha n}r_n^{-2} \E\Biggl(\frac{r_n(r_n-1)+C}{|\bm{Z_{n-1}}|}\Biggr)\\
			&\leq \rho^{\alpha n}\E(|\bm{Z_{n-1}}|^{-1})+\rho^{\alpha n}C\E(|\bm{Z_{n-1}}|^{-1})\\
			&\leq \rho^{\alpha n}\E(|\bm{Z_{n-1}}|^{-1})+\rho^{\alpha n}C\E(|\bm{Z_{n-1}}|^{-1})\\
			&\rightarrow 0\\
		\end{align*}
		
		With the last limit being a consequence of Lemma \ref{lem:aux}.
		
	\end{proof}
	
	\subsection{Proof of Theorem \ref{thm:iid}}
	
	\begin{proof}
		Denote, for integer $r$ as in the theorem statement
		
		$$\mathcal{F}_r=\{f:\{1, \ldots, r\} \rightarrow \{1, \ldots, l\} \}$$
		
		the set of all functions from $\{1, \ldots, r\}$ to $\{1, \ldots, l\}$. In context, $\mathcal{F}_r$ is the set of all possible parent type assignments in a sample of size $r$. For $f \in \mathcal{F}_r$, $i \in \{1, \ldots, l\}$ ,  $n \in \mathbb{N}$ and $U=\{u_1 ,\ldots , u_r\} \subset \mathbb{R}^l$ define also the total choices of offspring of type $i$ under assignment $f$ and observed vectors $U$ at generation $n$ as
		
		$$W_{f,i,n,U}=\binom{\sum_{j\in f^{-1}(\{i\})}|u_j|+\sum_{j=|f^{-1}(\{i\})|+1}^{Z_{n-1,i}}|Y_{n-1,j}^{(i)}|}{|f^{-1}(\{i\})|}$$
		
		The operator $||$ representing the $L^1$ norm when applied to a vector and the size of the set when applied to a pre image. To simplify notation, we'll use $W_{f,i,n,U}=W_{f,i}$ when the generation and the sample are understood. $W_{f,i}$ represents the number of choices of as many descendents of individuals of type $i$ as $f$ indicates out of a generation when the first $r$ members are known to be  $U=\{u_1 ,\ldots , u_r\}$ and nothing is known for the rest. An analogous combinatorial argument to that in the proof of Lemma \ref{lem:sam2} shows that
		
		\begin{multline}
			\P(X_1=u_1, \ldots, X_r=u_r,D_n)=  \sum_{\mathcal{F}_r}\Biggl(\prod_{w=1}^r p_{f(w)}(u_w)|u_w| \Biggr)\times \\  \E\Biggl(\prod_{s=0}^{r-1}(|\bm{Z_{n}}|-s)^{-1}\prod_{i=1}^l\Biggl[ \binom{Z_{n-1,i}}{|f^{-1}(\{i\})|}W_{f,i}^{-1}\prod_{t=0}^{|f^{-1}(\{i\})|-1}(S_{n-1,i}-t) \Biggr] \Biggr)
		\end{multline}

		Observe that, for fixed $t$ and fixed $f \in \mathcal{F}_r$, the ratio
		\begin{align*}
			&\Biggl|\frac{S_{n-1,i}-t}{\sum_{j\in f^{-1}(\{i\})}|u_j|+\sum_{j=|f^{-1}(\{i\})|+1}^{Z_{n-1,i}}|Y_{n-1,j}^{(i)}|-t}\Biggr|= \\
			& \Biggl|1-\frac{\sum_{j\in f^{-1}(\{i\})}|Y_{n-1,j}^{(i)}|-\sum_{j\in f^{-1}(\{i\})}|u_j|}{\sum_{j\in f^{-1}(\{i\})}|u_j|+\sum_{j=|f^{-1}(\{i\})|+1}^{Z_{n-1,i}}|Y_{n-1,j}^{(i)}| -t}\Biggr| \leq \\
			&2+\sum_{w=1}^r|u_w|
		\end{align*} is bounded and converges almost surely to one. For  fixed $t \in \{0 ,\ldots , |f^{-1}(\{i\})| \}$ and $s \in \{0 ,\ldots , r\}$
		
		$$\frac{Z_{n-1,i}-t}{|\bm{Z_{n}}|-s}\leq 1; \frac{Z_{n-1,i}-t}{|\bm{Z_{n}}|-s} \rightarrow \frac{b_i}{\rho} \mbox{ almost surely as } n \to \infty.$$
		
		With that, one has for any $f \in \mathcal{F}_r$
		
		$$\prod_{s=0}^{r-1}(|\bm{Z_{n}}|-s)^{-1}\prod_{i=1}^l\Biggl[ \binom{Z_{n-1,i}}{|f^{-1}(\{i\})|}W_{f,i}^{-1}\prod_{t=0}^{|f^{-1}(\{i\})|-1}(S_{n-1,i}-t) \Biggr] \leq (2+\sum_{w=1}^r|u_w|)^r $$
		
		$$\prod_{s=0}^{r-1}(|\bm{Z_{n}}|-s)^{-1}\prod_{i=1}^l\Biggl[ \binom{Z_{n-1,i}}{|f^{-1}(\{i\})|}W_{f,i}^{-1}\prod_{t=0}^{|f^{-1}(\{i\})|-1}(S_{n-1,i}-t) \Biggr] \rightarrow \rho^{-r}\prod_{i=1}^l b_i^{|f^{-1}(\{i\})|}$$
		
		So, as a consequence of the bounded convergence theorem, as $n\rightarrow  \infty$
		\begin{align*}
			\P(X_1=u_1, \ldots, X_r=u_r,D_n)&\rightarrow \rho^{-r}\sum_{\mathcal{F}_r}\Biggl(\prod_{w=1}^r p_{f(w)}(u_w)|u_w| \Biggr)\prod_{i=1}^l b_i^{|f^{-1}(\{i\})|}\\
			&=\rho^{-r}\sum_{\mathcal{F}_r}\Biggl(\prod_{w=1}^r p_{f(w)}(u_w)|u_w|b_{f(w)} \Biggr)\\
			&=\rho^{-r} (\prod_{w=1}^r |u_w|) \sum_{\mathcal{F}_r}\Biggl(\prod_{w=1}^r p_{f(w)}(u_w)b_{f(w)} \Biggr)\\
			&=\rho^{-r} (\prod_{w=1}^r |u_w|) \sum_{f(r)=1}^l \cdots \sum_{f(1)=1}^l \Biggl(\prod_{w=1}^r p_{f(w)}(u_w)b_{f(w)} \Biggr)\\
			&= \prod_{w=1}^r |u_w|\rho^{-1}\sum_{k=1}^l  p_{k}(u_w)b_{k} \\
			&=\prod_{w=1}^r p_S(u_w).\\
		\end{align*}
	
	By the triangle inequality.
		\begin{align*}
			&\Big|\P(X_w=u_w; w\in\{1, \ldots, r\})-\prod_{w=1}^rp_S(u_w)\Big|\\
			\leq & \Big|\P(X_w=u_w; w\in\{1, \ldots, r\})-\P(X_w=u_w; w\in\{1, \ldots, r\},D_n )\Big|\\
			+ &\Big|\P(X_w=u_w; w\in\{1, \ldots, r\},D_n )-\prod_{w=1}^rp_S(u_w)\Big|\\
			\leq & |\P(D_n^c )|+\Big|\P(X_w=u_w; w\in\{1, \ldots, r\},D_n )-\prod_{w=1}^rp_S(u_w)\Big| \to 0\\
		\end{align*}
		Where the term $|\P(X_w=u_w; w\in\{1, \ldots, r\},D_n )-\prod_{w=1}^rp_S(u_w)|$ vanishes by the preceeding result and $|\P(D_n^c )|$ vanishes by theorem \ref{thm:assympDn}. The desired result follows.
	\end{proof}
	
	\subsection{Proof of Theorem \ref{thm:asiid}}
	
	\begin{proof}
		
		With $\bm{X}=(X_1, \ldots, , X_{r_n})$ and $1(F)$ the indicator of the event $F$, denote
		
		$$\psi_n(\bm{t})=\E(1(D_n)\exp \{-i\langle\bm{t},\bm{X}\rangle \})$$
		
		It holds that
		
		\begin{align*}
			|\psi_n(\bm{t})-\phi_n(\bm{t})| & = |\E((1-1(D_n))\exp \{-i\langle\bm{t},\bm{X}\rangle \})| \\
			&\leq \E(|(1-1(D_n))\exp \{-i\langle\bm{t},\bm{X}\rangle \}|)\\
			& = \P(D_n^c). \\
		\end{align*}
		
		By assumption \ref{ass:cond}, theorem \ref{thm:assympDn} guarantees that $|\psi_n(\bm{t})-\phi_n(\bm{t})| \to 0$ uniformly on  $\bm{t}$. Using the notation $\bm{u} =(u_1, \ldots, u_{r_n})$ for $u_k \in\mathbb{N}^l$
		
		$$\phi(\bm{t})=\sum_{\bm{u} \in \mathbb{N}^{l\times r_n}}\left[ \frac{\Pi_{k=1}^{r_n}|u_k|}{\rho^{r_n}} \exp \{-i\langle\bm{t},\bm{u}\rangle \} \Biggl(\sum_{\mathcal{F}_{r_n}}\prod_{w=1}^{r_n} b_{f(w)} p_{f(w)}(u_w) \Biggr)\right]$$
		
		Note also that $|\psi_n(\bm{t})|\leq 1$ and
		
		\begin{align*}
			\psi_n(\bm{t})&=\E(1(D_n)\exp \{-i\langle\bm{t},\bm{X}\rangle \})\\
			&= \sum_{\bm{u} \in \mathbb{N}^{l\times r_n}}\P(\bm{X}=\bm{u}, D_n)\exp \{-i\langle\bm{t},\bm{u}\rangle\}\\
			&=\sum_{\bm{u} \in \mathbb{N}^{l\times r_n}}\sum_{\mathcal{F}_r}\Biggl(\prod_{w=1}^r p_{f(w)}(u_w)|u_w| \Biggr)\exp \{-i\langle\bm{t},\bm{u}\rangle\} \times \cdots \\
			& \E\Biggl(\prod_{s=0}^{r_n-1}(|\bm{Z_{n}}|-s)^{-1}\prod_{i=1}^l\Biggl[ \binom{Z_{n-1,i}}{|f^{-1}(\{i\})|}W_{f,i}^{-1}\prod_{t=0}^{|f^{-1}(\{i\})|-1}(S_{n-1,i}-t) \Biggr] \Biggr)\\
		\end{align*}
		
		So one has

		$$\phi(\bm{t})-\psi_n(\bm{t})=\sum_{\mathcal{F}_{r_n}}\sum_{\bm{u} \in \mathbb{N}^{l\times r_n}}\exp \{-i\langle\bm{t},\bm{u}\rangle\}A(\bm{u},f,n)V(\bm{u},f,n)$$

		Where 
		
		$$A(\bm{u},f,n)=\Biggl(\prod_{w=1}^{r_n} |u_w| p_{f(w)}(u_w) \rho^{-1} \Biggr)$$
		\begin{multline}\label{eq:a4}
			V(\bm{u},f,n)=\\ \prod_{w=1}^{r_n} b_{f(w)}-\E\Biggl(\prod_{s=0}^{r_n-1}\rho(|\bm{Z_{n}}|-s)^{-1}\prod_{i=1}^l\Biggl[ \binom{Z_{n-1,i}}{|f^{-1}(\{i\})|}W_{f,i}^{-1}\prod_{t=0}^{|f^{-1}(\{i\})|-1}(S_{n-1,i}-t)\Biggr] \Biggr).
		\end{multline}

		The proof of theorem \ref{thm:iid} shows that, for any $f \in \mathcal{F}_{r_n}$ and any $\bm{u} =(u_1 \ldots, u_{r_n})$ with $u_k \in\mathbb{N}^l$, as $n\rightarrow \infty$, $V(\bm{u},f,n)$ converges to zero, as the term inside the expected value in \eqref{eq:a4} is bounded and converges almost surely to $\prod_{w=1}^{r_n} b_{f(w)}=\prod_{w=1}^{r_n} b_{w}$. Moreover, for any $f \in \mathcal{F}_{r_n}$ it holds that
		
		$$|V(\bm{u},f,n)|\leq \E\Biggl( \prod_{i=1}^l\prod_{j=0}^{|f^{-1}\{i\}-1|} \Biggl|\frac{\sum_{ k \in f^{-1}\{i\}}(|u_k|-Y_{n-1,k}^{(i)})}{S_{n-1,i}+\sum_{ k \in f^{-1}\{i\}}(|u_k|-Y_{n-1,k}^{(i)})-j}\Biggr| \Biggr),$$ so a straightforward application of Jensen's inequality yields 
		
	    $$|V(\bm{u},f,n)|\leq \exp\Biggl(-\sum_{k=1}^{r_n}|u_k|+r_n\Biggr),$$which implies that for any value of $\bm{t}$,
		
		\begin{align*}
			|\phi(\bm{t})-\psi_n(\bm{t})|&\leq \sum_{\mathcal{F}_{r_n}}\sum_{\bm{u} \in \mathbb{N}^{l\times r_n}}A(\bm{u},f,n)|V(\bm{u},f,n)|\\
			&\leq \sum_{\mathcal{F}_{r_n}}\sum_{\bm{u} \in \mathbb{N}^{l\times r_n}}A(\bm{u},f,n)e^{-\sum_{k=1}^{r_n}|u_k|+r_n}\\
			&=\E\Biggl(\prod_{k=1}^{r_n} e^{-|B_i|+1} \Biggr)=\left[\E( e^{-|B_1|+1} )\right]^{r_n}\rightarrow 0,\\
		\end{align*} where $B_1 \ldots, B_{r_n}$ are $r_n$ iid $l$-dimensional random variables with probability mass function given by $p_S$, where convergence is due to the fact that $|B_1|\geq 1$ almost surely and $\P(|B_1|>1)$ together imply that $\E( e^{-|B_1|+1} )<1$.

	To conclude, note that for any $\bm{t}$
		
		$$|\phi(\bm{t})-\phi_n(\bm{t})|\leq |\phi_n(\bm{t})-\psi_n(\bm{t})|+|\phi(\bm{t})-\psi_n(\bm{t})|$$
		
		As both differences on the right hand side were shown to vanish asymptotically, taking limits as $n\rightarrow \infty$ yields the desired result.
		
	\end{proof}
	
	\subsection{Proof of Theorem \ref{thm:amle}}
	
	\begin{proof}
		Denote $$f_d(\theta)=\frac{\partial \log \prod_{j=1}^{r_n}p_S(X_j)}{\partial \theta_d}=\sum_{j=1}^{r_n}\frac{\partial \log p_S(X_j)}{\partial \theta_d}$$
		
		As $f'_d(\theta)=\frac{\partial f_d(\theta)}{\partial \theta_d}$ is continuous, the mean value theorem guarantees the existence of $\bar{\theta}$ between $\theta_{ML}$ and $\tilde{\theta}$ such that 
		
		\begin{equation} \label{eq:amle1}
			f'_d(\bar{\theta})[(\theta_{ML})_d-\tilde{\theta}_d]=f_d(\theta_{ML})-f_d(\tilde{\theta}).
		\end{equation}
		
		As $\theta_{ML}$ maximizes $\log\left(\prod_{j=1}^{r_n}p_S(X_j)\right)$, which is continuously differentiable, it follows that $f_d(\theta_{ML})=0$ and \eqref{eq:amle1} can be rewritten as 
		
		\begin{equation} \label{eq:amle2}
			\sqrt{r_n}[(\theta_{ML})_d-\tilde{\theta}_d]=-\frac{f_d(\tilde{\theta})/\sqrt{r_n}}{f'_d(\bar{\theta})/r_n}.
		\end{equation}
		
		As the domain of $p_1, \ldots, p_l$ does not depend on parameter $\theta$, from Fubini's Theorem and the fact that the sum of probability masses is always 1 follows that $\E\bigg(\frac{\partial \log p_S(X)}{\partial \theta_d}\bigg)=\sum_{\bm{u}: p_S(\bm{u})>0} \frac{\partial \log p_S(\bm{u})}{\partial \theta_d}p_S(\bm{u})=\sum_{\bm{u}: p_S(\bm{u})>0} \frac{\partial p_S(\bm{u})}{\partial \theta_d}=\frac{\partial \sum_{\bm{u}: p_S(\bm{u})>0}p_S(\bm{u})}{\partial \theta_d}=0$. The Central Limit Theorem then yields
		
		$$f_d(\tilde{\theta})/\sqrt{r_n} =  \frac{1}{\sqrt{r_n}}\sum_{j=1}^{r_n}\frac{\partial \log p_S(X_j)}{\partial \theta_d} \rightarrow_\mathcal{L} \mathcal{N}(0, s_d),$$with $s_d=\E\Biggl(\Biggl[\frac{\partial \log p_S(X)}{\partial \theta_d}\Biggr]^2\Biggr)=\sum_{\bm{u}: p_S(\bm{u})>0} \Biggl[\frac{\partial p_S(\bm{u})}{\partial \theta_d}\Biggr]^2\frac{1}{p_S(\bm{u})}$ the variance of $\frac{\partial \log p_S(X)}{\partial \theta_d}$. Similarly, the strong law of large numbers and the continuous mapping theorem, 
		
		$$\frac{f'_d(\bar{\theta})}{r_n} \to \sum_{\bm{u}: p_S(\bm{u})>0} \frac{\partial^2 \log p_S(\bm{u})}{\partial \theta^2_d}p_S(\bm{u})=\sum_{\bm{u}: p_S(\bm{u})>0}\frac{\partial^2 p_S(\bm{u})}{\partial \theta^2_d}+ \Biggl[\frac{\partial p_S(\bm{u})}{\partial \theta_d}\Biggr]^2\frac{1}{p_S(\bm{u})}=s_d$$
		
		Using Slutsky's Theorem, we conclude that $\sqrt{r_n}[(\theta_{ML})_d-\tilde{\theta}_d]\rightarrow_\mathcal{L} \mathcal{N}(0, s_d^{-1})$ as desired.
		
	\end{proof}
	
	\subsection{Proof of Theorem \ref{thm:moments}}
	
	\begin{proof}
		Let $\bm{X}\sim p_S$ be an $l$-dimensional random variable. One has $\E(|\bm{X}|^{-1})=\sum_{u\in \mathbb{R}^l}\sum_{i=1}^lb_ip_i(\bm{u})\rho^{-1}=\rho^{-1}$ and $ \E\left(\frac{X_j}{|\bm{X}|}\right)=\sum_{u\in \mathbb{R}^l}\sum_{i=1}^l u_jb_ip_i(\bm{u})\rho^{-1}=\rho^{-1}\sum_{i=1}^lM_{i,j}b_i=b_j$, so by the Central Limit Theorem it follows that
		
		$$\sqrt{r_n}(T_n-\rho^{-1})\rightarrow_\mathcal{L} \mathcal{N}_1(0,\sigma_T^2), \quad \sqrt{r_n}(U_n-b)\rightarrow_\mathcal{L} \mathcal{N}_l(\bm{0}, \Sigma),$$ the second convergence result being the desired one. Using the delta method with the function $f(x)=x^{-1}$ on the asymptotic distribution of $\sqrt{r_n}(T_n-\rho^{-1})$ proves that 
		
		$$\sqrt{r_n}(T_n^{-1}-\rho)\rightarrow_\mathcal{L} \mathcal{N}_1(0,\sigma_T^2\rho^{-4}),$$ and the theorem is proven.
		
	\end{proof}

\end{document}